\documentclass[11pt,english]{smfart}
\usepackage[T1]{fontenc}
\usepackage[colorlinks=true]{hyperref}
\hypersetup{urlcolor=blue, linkcolor=blue, citecolor=red, anchorcolor=blue]}
\usepackage{amssymb,url,xspace,smfthm}
\usepackage{amsmath}
\usepackage{paralist}
\usepackage{graphics}
\usepackage{epsfig}
\usepackage{graphicx}
\usepackage[bb=boondox]{mathalfa}
\usepackage{epstopdf}
\usepackage{setspace}
\usepackage{fourier,mathrsfs}
\usepackage[normalem]{ulem}

\newcounter{Hequation}

\makeatletter\g@addto@macro\equation{\stepcounter{Hequation}}\makeatother

\addtolength{\voffset}{-1cm}
\textheight=23cm
\textwidth=13cm
\tolerance 400
\pretolerance 200

\newtheorem{theorem}{Theorem}
\newtheorem{corollary}[theorem]{Corollary}
\newtheorem{lemma}[theorem]{Lemma}
\newtheorem{proposition}[theorem]{Proposition}

\theoremstyle{definition}

\newcommand\N{{\mathbb N}}
\newcommand\R{{\mathbb R}}
\newcommand\T{{\mathbb T}}

\newcommand\Z{{\mathbb Z}}

\newcommand{\RR}{\mathbb{R}}


\newcommand{\cD}{\mathcal D}



\def\sfL{\mathsf{L}}
\def\sfT{\mathsf T}

\DeclareMathOperator{\re}{Re}

\def\eps{{\varepsilon}}

\setlength{\marginparwidth}{.8in}
\let\oldmarginpar\marginpar
\renewcommand\marginpar[1]{\-\oldmarginpar[\raggedleft\footnotesize #1]%
{\raggedright\footnotesize #1}}

\newcommand{\nrm}[2]{\left\|{#1}\right\|_{#2}}


\newcommand{\nc}{\normalcolor}

\newcommand{\Email}[1]{\email{\href{mailto:#1}{\textsf{#1}}}}
\newcommand{\be}[1]{\begin{equation}\label{#1}}
\newcommand{\ee}{\end{equation}}
\renewcommand{\(}{\left(}
\renewcommand{\)}{\right)}
\newcommand{\fav}{{f_{\kern-0.5pt\bullet}}}

\title[Hypocoercivity without confinement]{Hypocoercivity without confinement}
\date{\today}
\author[E.~Bouin]{Emeric Bouin}
\address[E.~Bouin]{CEREMADE (CNRS UMR n$^\circ$ 7534), PSL university, Universit\'e Paris-Dauphine, Place de Lattre de Tassigny, 75775 Paris 16, France}
\Email{bouin@ceremade.dauphine.fr}

\author[J.~Dolbeault]{Jean Dolbeault}
\address[J.~Dolbeault]{CEREMADE (CNRS UMR n$^\circ$ 7534), PSL university, Universit\'e Paris-Dauphine, Place de Lattre de Tassigny, 75775 Paris 16, France}
\Email{dolbeaul@ceremade.dauphine.fr}

\author[S.~Mischler]{St\'ephane Mischler}
\address[S.~Mischler]{CEREMADE (CNRS UMR n$^\circ$ 7534), PSL university, Universit\'e Paris-Dauphine, Place de Lattre de Tassigny, 75775 Paris 16, France}
\Email{mischler@ceremade.dauphine.fr}

\author[C.~Mouhot]{\newline\hspace*{-1cm} Cl\'ement Mouhot}
\address[C.~Mouhot]{DPMMS, Center for Mathematical Sciences, University of Cambridge, Wilberforce Road, Cambridge CB3 0WA, UK}
\Email{C.Mouhot@dpmms.cam.ac.uk}

\author[C.~Schmeiser]{Christian Schmeiser}
\address[C.~Schmeiser]{Fakult\"at f\"ur Mathematik, Universit\"at Wien, Oskar-Morgenstern-Platz 1, 1090 Wien, Austria}
\Email{Christian.Schmeiser@univie.ac.at}

\thanks{Corresponding author: \'Emeric Bouin}

\subjclass{Primary: 82C40. Secondary: 76P05; 35H10; 35K65; 35P15; 35Q84.}
%
%
%
%
%
%
%
%
%
%
%
%
%

\keywords{Hypocoercivity; linear kinetic equations; Fokker-Planck operator; scattering operator; transport operator; Fourier mode decomposition; Nash's inequality; factorization method; Green's function; micro/macro decomposition; diffusion limit}

\begin{abstract} In this paper, hypocoercivity methods are applied to linear kinetic equations with mass conservation and without confinement, in order to prove that the solutions have an algebraic decay rate in the long-time range, which the same as the rate of the heat equation. Two alternative approaches are developed: an analysis based on decoupled Fourier modes and a direct approach where, instead of the Poincar\'e inequality for the Dirichlet form, Nash's inequality is employed. The first approach is also used to provide a simple proof of exponential decay to equilibrium on the flat torus. The results are obtained on a space with exponential weights and then extended to larger function spaces by a factorization method. The optimality of the rates is discussed. Algebraic rates of decay on the whole space are improved when the initial datum has moment cancellations. \end{abstract}
\thispagestyle{empty}

\begin{document}
\maketitle
\vspace*{-1cm}

\section{Introduction}\label{sec:introduction}

We consider the Cauchy problem
\be{eq:model}
\partial_t f+v\cdot\nabla_xf=\sfL f\,,\quad f(0,x,v)=f_0(x,v)
\ee
for a distribution function $f(t,x,v)$, with \emph{position} variable $x\in\R^d$, \emph{velocity} variable $v\in\RR^d$, and with \emph{time} $t\ge 0$. Concerning the \emph{collision operator} $\sfL$, we shall consider two cases:
\begin{itemize}
\item[(a)] \emph{Fokker-Planck} collision operator:
\[
\sfL f=\nabla_v\cdot\Big[M\,\nabla_v\(M^{\kern 0.5pt-1}\,f\)\Big]\,,
\]
\item[(b)] \emph{Scattering} collision operator:
\[
\sfL f=\int_{\R^d}\sigma(\cdot,v')\,\big(f(v')\,M(\cdot)-f(\cdot)\,M(v')\big)\,dv'\,.
\]
\end{itemize}
We shall make the following assumptions on the \emph{local equilibrium} $M(v)$ and on the \emph{scattering rate} $\sigma(v,v')$:
\begin{align}
\tag{H1}\label{H1}&\int_{\R^d} M(v)\,dv = 1 \,, \quad \nabla_v\sqrt{M} \in \mathrm L^2(\R^d) \,,\quad M\in C(\R^d)\,,\\
\nonumber&\hspace*{1cm}M = M(|v|)\,,\quad 0 < M(v) \le c_1 e^{-c_2|v|} \,,\quad \forall\,v\in\R^d \,, \quad\text{for some } c_1,\;c_2 > 0 \,.\\
\tag{H2}\label{H2}&1\le \sigma(v,v') \le \overline\sigma \,, \quad\forall v,v'\in\R^d \,,\quad \text{for some } \overline\sigma \ge 1 \,.\\
\tag{H3}\label{H3}&\int_{\R^d}\big(\sigma(v,v')-\sigma(v',v)\big)\,M(v')\,dv'=0,
                    \quad\forall\,v\in\R^d \,.
\end{align}

Before stating our main results, let us list some preliminary observations.
\\
(i) A typical example of a \emph{local equilibrium}
satisfying~\eqref{H1} is the Gaussian 
\be{Gaussian}
M(v)=\frac{e^{-\frac{|v|^2}2}}{(2\pi)^{d/2}}  \,.
\ee
(ii) With $\sigma\equiv1$, Case~(b) includes the relaxation operator \hbox{$\sfL f=M\rho_f-f$}, also known as the \emph{linear BGK operator}, with position density defined by
\[
\rho_f(t,x):=\int_{\R^d}f(t,x,v)\,dv\,.
\]
(iii) Positivity and exponential decay of the local equilibrium are essential for our approach. The assumption on the gradient and
continuity are technical and only needed for some of our results. Rotational symmetry is not important, but assumed for 
computational convenience. However the property
$$
\int_{\R^d} vM(v)dv = 0 \,,
$$
\emph{i.e.}, \emph{zero flux in local equilibrium}, is essential. 
\\
(iv) Since micro-reversibility (or detailed balance), \emph{i.e.}, symmetry of $\sigma$, is not required, Assumption~\eqref{H3} is needed for \emph{mass conservation}, \emph{i.e.}, 
$$
\int_{\R^d} \sfL f\,dv = 0 \,, 
$$
in Case (b). The boundedness away from zero of $\sigma$ in~\eqref{H2}
guarantees coercivity of~$\sfL$ relative to its nullspace (such bound
can always be written $\sigma \ge 1$ by scaling).

\medskip Since $e^{t \sfL}$ propagates probability densities,
\emph{i.e.}, conserves mass and nonnegativity, $\sfL$ dissipates
convex relative entropies, implying in particular
$$
\int_{\R^d} \sfL f \frac{f}{M}\,dv \le 0 \,.
$$
This suggests to use the $\mathrm L^2$-space with the measure
$d\gamma_\infty := \gamma_\infty\,dv$, where
$\gamma_\infty(v)=M(v)^{-1}$, as a functional analytic framework (the
subscript $\infty$ will make sense later). We shall need the
\emph{microscopic coercivity} property \be{H4}\tag{H4} -\int_{\R^d}
f\,\mathsf{L}f\,d\gamma_\infty \ge
\lambda_m\int_{\R^d}\(f-M\,\rho_f\)^2\,d\gamma_\infty \,, \ee with
some $\lambda_m>0$. In Case~(a) it is equivalent to the Poincar\'e
inequality with weight~$M$,
\[\label{eq:Poincarea}
  \int_{\R^d}|\nabla_v h|^2\,M\,dv\ge\lambda_m
  \int_{\R^d}\(h-\int_{\R^d} h\,M\,dv\)^2\,M\,dv \,,
\]
for all $h=f/M\in\mathrm{H}^1(M\,dv)$. It holds as a consequence of
the exponential decay assumption in~\eqref{H1} (see, \emph{e.g.},
\cite{Nash,bakry:hal-00634523}). For the normalized Gaussian
\eqref{Gaussian} the optimal constant is known to be $\lambda_m=1$ (see for instance~\cite{MR954373} and references therein).
In Case~(b), \eqref{H4} means
\[\label{eq:Poincareb}
\frac12 \iint_{\R^d\times\R^d}\sigma(v,v')\,M(v)\,M(v')\(u(v)-u(v')\)^2dv'\,dv\ge\lambda_m\int_{\R^d}\(u-\rho_{u\,M}\)^2M\,dv \,,
\]
for all $u=f/M\in\mathrm L^2(M\,dv)$, and it holds with $\lambda_m=1$
as a consequence of the lower bound for $\sigma$ in
Assumption~\eqref{H2}.

Although the transport operator does not contribute to entropy
dissipation, its dispersion in the $x$-direction in combination with
the dissipative properties of the collision operator yields the
desired decay results.  In order to perform a \emph{mode-by-mode
  hypocoercivity} analysis, we introduce the Fourier representation
with respect to $x$,
\[
  f(t,x,v)=\int_{\R^d}\hat f(t,\xi,v)\,e^{+i\,x\cdot\xi}\,d\mu(\xi)\,,
\]
where $d\mu(\xi)=(2\pi)^{-d}\,d\xi$ and $d\xi$ is the Lesbesgue
measure on $\R^d$. The normalization of $d\mu(\xi)$ is chosen such
that Planche\-rel's formula reads
\[
  \left\|f(t,\cdot,v)\right\|_{\mathrm L^2(dx)}=
  \left\|\hat f(t,\cdot,v)\right\|_{\mathrm L^2(d\mu(\xi))}
\]
with a straightforward abuse of notations. The Cauchy
problem~\eqref{eq:model} in Fourier variables is now decoupled in the
$\xi$-direction: \be{eq:hat} \partial_t \hat f+i\,(v\cdot\xi)\,\hat
f=\sfL\hat f\,,\quad\hat f(0,\xi,v)=\hat{f_0}(\xi,v) \,.  \ee

Our main results are devoted to \emph{hypocoercivity without
  confinement}: when the variable $x$ is taken in $\R^d$, we assume
that there is no potential preventing the runaway corresponding to
$|x|\to+\infty$. So far, hypocoercivity results have been obtained
either in the compact case corresponding to a bounded domain in~$x$,
for instance $\T^d$, or in the whole Euclidean space with an external
potential $V$ such that the measure $e^{-V}\,dx$ admits a Poincar\'e
inequality. Usually other technical assumptions are required on $V$
and there are many variants (for instance one can assume a stronger
logarithmic Sobolev inequality instead of a Poincar\'e inequality),
but the common property is that some growth condition on $V$ is
assumed and in particular the measure $e^{-V}\,dx$ is bounded. Here we
consider the case $V\equiv0$, which is obviously a different
regime. By replacing the Poincar\'e inequality by Nash's inequality or
using direct estimates in Fourier variables, we adapt the
$\mathrm L^2$ hypocoercivity methods and prove that an appropriate
norm of the solution decays at a rate which is the rate of the heat
equation. This observation is compatible with diffusion limits, which
have been a source of inspiration for building Lyapunov functionals
and establishing the $\mathrm L^2$ hypocoercivity method
of~\cite{DMS-2part}. Before stating any result, we need some notation
to implement the \emph{factorization} method of~\cite{MR3779780} and
obtain estimates in large functional spaces.

Let us consider the measures \be{Def:Measures}
d\gamma_k:=\gamma_k(v)\,dv\quad\mbox{where}\quad\gamma_k(v)=\left(1+|v|^2\right)^{k/2}
\quad\mbox{and}\quad k>d\,, \ee such that
$1/\gamma_k\in\mathrm L^1(\R^d)$. The condition $k\in(d,\infty]$ then
covers the case of weights with a growth of the order of $|v|^k$, when
$k$ is finite, and we denote $k=\infty$ the case when the weight
$\gamma_\infty = M^{-1}$ grows at least exponentially fast.

\begin{theorem}\phantomsection\label{th:whole-space-mode} Assume~\eqref{H1}--\eqref{H4}, $x\in\R^d$, and $k\in(d,\infty]$. Then there exists a constant $C>0$ such that solutions $f$ of~\eqref{eq:model} with initial datum $f_0\in\mathrm L^2(dx\,d\gamma_k)\cap\mathrm L^2(d\gamma_k;\,\mathrm L^1(dx))$ satisfy, for all $t\ge 0$,
\[
\left\|f(t,\cdot,\cdot)\right\|_{\mathrm L^2\(dx\,d\gamma_k\)}^2\le C\,\frac{\left\|f_0\right\|_{\mathrm L^2\(dx\,d\gamma_k\)}^2+ \left\|f_0\right\|^2_{\mathrm L^2\(d\gamma_k;\,\mathrm L^1(dx)\)}}{(1+t)^{d/2}}\,.
\]
\end{theorem}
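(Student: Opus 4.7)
The plan proceeds in three stages: first the exponentially-weighted case $k=\infty$ treated via the Fourier decomposition \eqref{eq:hat}, then the integration of the modewise estimate against $d\mu(\xi)$ using the $\mathrm L^1(dx)$ regularity of the initial datum, and finally an extension to $k<\infty$ by the factorization method.

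For $k=\infty$, each Fourier mode $\hat f(t,\xi,\cdot)\in\mathrm L^2(d\gamma_\infty)$ evolves according to \eqref{eq:hat}. Following the $\mathrm L^2$ hypocoercivity framework of \cite{DMS-2part}, I would construct a mode-by-mode Lyapunov functional
\[
\mathsf H_\xi[g]:=\tfrac12\,\|g\|^2_{\mathrm L^2(d\gamma_\infty)}+\delta\,\Te\,\langle A_\xi g,g\rangle_{\mathrm L^2(d\gamma_\infty)}\,,
\]
where $T_\xi g:=i(v\cdot\xi)\,g$, $\Pi$ is the orthogonal projection onto $\mathrm{span}(M)$ in $\mathrm L^2(d\gamma_\infty)$, and $A_\xi:=\bigl(1+(T_\xi\Pi)^*(T_\xi\Pi)\bigr)^{-1}(T_\xi\Pi)^*$. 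Combining the microscopic coercivity \eqref{HH1} with the Poincaré inequality for $M\,dv$ applied to $\rho_{\hat f}$, which plays the role of macroscopic coercivity and provides the constant $\Lambda_M(\xi)\sim|\xi|^2$, one verifies that for $\delta>0$ small enough independently of $\xi$,
\[
\dt\,\mathsf H_\xi[\hat f(t,\xi,\cdot)]\le-\lambda(\xi)\,\mathsf H_\xi[\hat f(t,\xi,\cdot)]\,,\qquad\lambda(\xi)=\kappa\,\frac{|\xi|^2}{1+|\xi|^2}\,,
\]
for some $\kappa>0$. This is the main obstacle of the argument: the rate $\lambda(\xi)$ must interpolate between the $|\xi|^2$ behavior at low frequencies (the diffusive regime compatible with the heat scaling) and a uniform positive bound at high frequencies (dissipative regime), while the cross terms $\langle A_\xi T_\xi g,g\rangle$ and $\langle A_\xi\,\sfL g,g\rangle$ must be controlled uniformly in $\xi$. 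Equivalence of $\mathsf H_\xi$ with $\|\cdot\|_{\mathrm L^2(d\gamma_\infty)}^2$ then produces $\|\hat f(t,\xi,\cdot)\|^2_{\mathrm L^2(d\gamma_\infty)}\le C\,e^{-\lambda(\xi)\,t}\,\|\hat f_0(\xi,\cdot)\|^2_{\mathrm L^2(d\gamma_\infty)}$.

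To descend to the $\mathrm L^2(dx\,d\gamma_\infty)$ norm I would integrate against $d\mu(\xi)$ and split at $|\xi|=1$. On $\{|\xi|\ge 1\}$, the uniform lower bound $\lambda(\xi)\ge\kappa/2$ combined with Plancherel yields a contribution of order $e^{-\kappa t/2}\,\|f_0\|^2_{\mathrm L^2(dx\,d\gamma_\infty)}$. On $\{|\xi|<1\}$, the elementary pointwise bound $|\hat f_0(\xi,v)|\le\|f_0(\cdot,v)\|_{\mathrm L^1(dx)}$ gives
\[
\|\hat f_0(\xi,\cdot)\|^2_{\mathrm L^2(d\gamma_\infty)}\le\|f_0\|^2_{\mathrm L^2(d\gamma_\infty;\,\mathrm L^1(dx))}
\]
uniformly in $\xi$, and combining with the standard Gaussian integral $\int_{|\xi|<1}e^{-\kappa|\xi|^2 t/2}\,d\mu(\xi)\lesssim(1+t)^{-d/2}$ produces the claimed algebraic heat-like rate in the case $k=\infty$.

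For the passage to $k\in(d,\infty)$ I would apply the factorization method of \cite{MR3779780}: decompose $\sfL-v\cdot\nabla_x=\cA+\cB$ with $\cA$ a bounded velocity-truncating smoothing operator and $\cB$ hypodissipative in $\mathrm L^2(dx\,d\gamma_k)$. Iterating Duhamel's formula
\[
e^{t(\sfL-v\cdot\nabla_x)}=e^{t\cB}+\int_0^t e^{(t-s)(\sfL-v\cdot\nabla_x)}\,\cA\,e^{s\cB}\,ds
\]
and inserting the rate already established on the small exponentially-weighted space $\mathrm L^2(dx\,d\gamma_\infty)$ transfers the heat-like algebraic decay to the larger polynomially-weighted space $\mathrm L^2(dx\,d\gamma_k)$, yielding the stated bound.
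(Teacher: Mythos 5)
Your first two stages are exactly the paper's first proof: the mode-by-mode Lyapunov functional with $\lambda(\xi)=\kappa\,|\xi|^2/(1+|\xi|^2)$ is \refres{Corollary}{lec14-modelemm} (note, though, that the macroscopic coercivity at fixed $\xi$ needs no Poincar\'e inequality in $x$ -- it is the identity $\|\mathsf T\mathsf\Pi F\|^2=\Theta\,|\xi|^2\,\|\mathsf\Pi F\|^2$; the Poincar\'e inequality with weight $M$ only enters the \emph{microscopic} coercivity in Case (a)), and the splitting at $|\xi|=1$ with $|\hat f_0(\xi,v)|\le\|f_0(\cdot,v)\|_{\mathrm L^1(dx)}$ on low frequencies and Plancherel on high frequencies is Section~\ref{Sec:WholeSpaceFourier} verbatim. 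The uniform-in-$\xi$ control of the auxiliary operators that you flag as the main obstacle is carried out in the paper by the explicit bounds $\|\mathsf{AL}F\|\le\kappa\,|\xi|\,(1+\Theta|\xi|^2)^{-1}\|(1-\mathsf\Pi)F\|$ and $\|\mathsf{AT}(1-\mathsf\Pi)F\|\le\sqrt K\,|\xi|^2(1+\Theta|\xi|^2)^{-1}\|(1-\mathsf\Pi)F\|$, which is routine.

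The genuine gap is in your last step, the passage from $k=\infty$ to $k\in(d,\infty)$. You propose to run the enlargement \emph{after} integrating in $\xi$, i.e.\ on the whole-space semigroup, ``inserting the rate already established on $\mathrm L^2(dx\,d\gamma_\infty)$'' into the Duhamel iteration. But that rate is not an operator-norm bound on a single Banach space: it is algebraic, and it requires the datum simultaneously in $\mathrm L^2(dx\,d\gamma_\infty)$ and in $\mathrm L^2(d\gamma_\infty;\mathrm L^1(dx))$. So when you estimate $\int_0^t e^{(\mathfrak A+\mathfrak B)s}\,\mathfrak A\,e^{\mathfrak B(t-s)}f_0\,ds$ you must control $\mathfrak A\,e^{\mathfrak B(t-s)}f_0$ in \emph{both} norms, which forces you to prove, in addition, exponential decay of $e^{\mathfrak Bt}$ in the mixed space $\mathrm L^2(d\gamma_k;\mathrm L^1(dx))$. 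This is immediate in Case (b), where $e^{\mathfrak Bt}$ is free transport times multiplication by $e^{-\nu(v)t}$ with $\nu\ge1$, but in Case (a) it is a nontrivial hypodissipativity estimate in an $\mathrm L^2_v(\mathrm L^1_x)$-type space that your proposal neither states nor proves; Proposition~\ref{th:factorization} as written simply does not apply. The paper sidesteps this entirely by reversing the order: the enlargement is performed \emph{mode by mode}, at fixed $\xi$, where the small-space decay is a genuine exponential operator bound on $\mathrm L^2(d\gamma_\infty)$ in the velocity variable alone (\refres{Corollary}{th:fac-appl}, using the dissipativity of $\mathsf L-\mathfrak A$ in $\mathrm L^2(d\gamma_k)$ from \cite[Lemma~3.8]{MisMou} in Case (a)), and only then is the estimate $e^{-\mu_\xi t}$, now valid in $\mathrm L^2(d\gamma_k)$, integrated in $\xi$ exactly as in your second stage. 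Either adopt that ordering, or supply the missing two-norm bookkeeping and the $\mathrm L^2(d\gamma_k;\mathrm L^1(dx))$ decay of $e^{\mathfrak Bt}$ for the Fokker--Planck case; as written, the final step does not close.
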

For the heat equation improved decay rates can be shown by Fourier techniques, if the modes with slowest decay are eliminated
from the initial data. The following two results are in this spirit. 
\begin{theorem}\phantomsection\label{th:whole-space-zero-average} 
Let the assumptions of Theorem \ref{th:whole-space-mode} hold, and let 
$$
\iint_{\R^d\times \R^d}f_0\,dx\,dv=0 \,.
$$
Then there exists $C>0$ such that solutions $f$ of~\eqref{eq:model} with initial datum $f_0$ satisfy, for all $t\ge 0$,
\[
\left\|f(t,\cdot,\cdot)\right\|_{\mathrm L^2(dx\,d\gamma_k)}^2 \le C\,\frac{\left\|f_0\right\|_{\mathrm L^2(d\gamma_{k+2};\,\mathrm L^1(dx))}^2+\left\|f_0\right\|^2_{\mathrm L^2\(d\gamma_k;\,\mathrm L^1(|x|\,dx)\)}+\left\|f_0\right\|^2_{\mathrm L^2(dx\,d\gamma_k)}}{(1+t)^{d/2+1}},
\]
with $k \in(d,\infty)$.
\end{theorem}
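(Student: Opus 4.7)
The plan is to follow the Fourier approach of Theorem~\ref{th:whole-space-mode} and sharpen the low-frequency part of the $\xi$-integrand by exploiting the zero-mean assumption. By Plancherel and the mode-by-mode hypocoercivity estimate underlying Theorem~\ref{th:whole-space-mode}, one controls
\[
\|f(t,\cdot,\cdot)\|_{\mathrm L^2(dx\,d\gamma_k)}^2 \;\lesssim\; \int_{\R^d} e^{-\mu(\xi)\,t}\,\Phi(\xi)\,d\mu(\xi)\,,
\]
with $\mu(\xi)\sim|\xi|^2/(1+|\xi|^2)$ and $\Phi(\xi)$ a quantity built out of $\hat f_0(\xi,\cdot)$. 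The $(1+t)^{-d/2}$ rate of Theorem~\ref{th:whole-space-mode} is produced by $\int_{|\xi|\le 1} e^{-c|\xi|^2 t}\,d\xi$; the goal is to extract an extra factor of $|\xi|^2$ from $\Phi(\xi)$ near the origin so as to upgrade the rate to $(1+t)^{-d/2-1}$.

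I would decompose $\hat f_0(\xi,v)=M(v)\,\hat\rho_{f_0}(\xi)+\hat g_0(\xi,v)$ into macroscopic and microscopic components. The hypothesis $\iint f_0\,dx\,dv=0$ reads $\hat\rho_{f_0}(0)=0$, so a first-order Taylor expansion gives
\[
|\hat\rho_{f_0}(\xi)|\;\le\;|\xi|\int_{\R^d}\!\!\int_{\R^d}|x|\,|f_0(x,v)|\,dx\,dv \;\le\; C\,|\xi|\,\|f_0\|_{\mathrm L^2(d\gamma_k;\,\mathrm L^1(|x|\,dx))}\,,
\]
by Cauchy--Schwarz together with $\gamma_k^{-1/2}\in\mathrm L^2(dv)$ (valid since $k>d$). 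Injected into the macroscopic piece of the mode-by-mode estimate, this furnishes the $|\xi|^2$ factor in the low-frequency integral and hence the term $\|f_0\|_{\mathrm L^2(d\gamma_k;\,\mathrm L^1(|x|\,dx))}^2$ of the conclusion. The high-frequency range $|\xi|\ge 1$ is treated as in Theorem~\ref{th:whole-space-mode}: there $\mu(\xi)\gtrsim 1/2$, so Plancherel produces an exponentially decaying remainder absorbed into $\|f_0\|_{\mathrm L^2(dx\,d\gamma_k)}^2$.

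The main obstacle is the microscopic component $\hat g_0$, which does not inherit the pointwise cancellation of $\hat\rho_{f_0}$ at $\xi=0$. The missing factor of $|\xi|$ has to be extracted from the kinetic structure itself: the free-transport term in~\eqref{eq:hat} couples microscopic to macroscopic precisely through the flux $i\,\xi\cdot\int v\,\hat f\,dv$, which naturally carries one factor of $|\xi|$. Running the mode-by-mode Lyapunov analysis in this direction---informally, one Duhamel step exchanging macroscopic and microscopic variables---supplies the needed $|\xi|$ at the cost of one extra velocity moment on the initial data, which is exactly why the weight $\gamma_{k+2}$ appears and produces the first term $\|f_0\|_{\mathrm L^2(d\gamma_{k+2};\,\mathrm L^1(dx))}^2$. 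Summing the three contributions then closes the estimate. The delicate point throughout is that all bounds must be propagated in the $\mathrm L^2(d\gamma_k)$ framework rather than in the natural $\mathrm L^2(d\gamma_\infty)=\mathrm L^2(M\,dv)$ space where hypocoercivity is cleanest; this is the role of the factorization method recalled before Theorem~\ref{th:whole-space-mode}.
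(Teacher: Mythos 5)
Your proposal is correct in substance and rests on the same three ingredients as the paper's argument (the zero-mass cancellation at low frequency, exponential decay of the zero-mean-in-$v$ part under $\sfL$ via microscopic coercivity plus the factorization propositions, and a Duhamel step in which the transport term $i\,(v\cdot\xi)$ supplies the extra $|\xi|$ at the cost of one velocity moment, whence $\gamma_{k+2}$), but it is organized around a different decomposition. You split $\hat f_0(\xi,\cdot)$ into $M\,\hat\rho_{f_0}(\xi)$, which the hypothesis kills to order $|\xi|$, plus the microscopic remainder $(1-\mathsf\Pi)\hat f_0$, to be handled by your informal ``one Duhamel step''. The paper instead subtracts $\fav(t,v)\,\varphi(x)$ from $f$, where $\fav=\int_{\R^d}f\,dx$ solves $\partial_t\fav=\sfL\fav$ with zero mass and hence decays exponentially in $\mathrm L^2(|v|^2\,d\gamma_k)$; the remainder $g$ then satisfies $\hat g(t,0,v)\equiv0$, so the Lipschitz bound $|\hat g_0(\xi,v)|\le|\xi|\,\|g_0(\cdot,v)\|_{\mathrm L^1(|x|\,dx)}$ holds pointwise in $v$ with no micro/macro splitting, and the only Duhamel source is $\fav\,i\,(v\cdot\xi)\,\hat\varphi(\xi)$, which carries both the $|\xi|$ and the extra velocity weight. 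Your microscopic step, left informal, is precisely where this machinery must be deployed: one compares $e^{(\sfL-i\,v\cdot\xi)\,t}(1-\mathsf\Pi)\hat f_0$ with $e^{\sfL t}(1-\mathsf\Pi)\hat f_0$, the latter decaying like $e^{-\lambda t}$ in $\mathrm L^2(d\gamma_{k+2})$ because it has zero mass, and bounds the difference by $|\xi|\int_0^t e^{-\mu_\xi(t-s)/2}\,e^{-\lambda s}\,ds$ times $\sup_\xi\|\hat f_0(\xi,\cdot)\|_{\mathrm L^2(d\gamma_{k+2})}\lesssim\|f_0\|_{\mathrm L^2(d\gamma_{k+2};\,\mathrm L^1(dx))}$, using \refres{Corollary}{th:fac-appl}; this reproduces the paper's estimates mode by mode. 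Two bookkeeping points confirm your instincts: the refinement must indeed be confined to $|\xi|\le1$ (carrying the correction term to high frequencies would cost $\|f_0\|_{\mathrm L^2(dx\,d\gamma_{k+2})}$, a norm absent from the statement, whereas in the paper the fixed profile $\hat\varphi$ provides the high-frequency integrability), and at the end one adds back the exponentially decaying piece ($\fav\varphi$ in the paper, the homogeneous evolution of the microscopic part in your version). What each route buys: the paper's subtraction treats macro and micro components in one stroke and keeps constants explicit, while your splitting is more modular and makes transparent that the hypothesis enters only through $\hat\rho_{f_0}(0)=0$, the microscopic contribution improving automatically.
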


The case of Theorem~\ref{th:whole-space-zero-average}, but with $k=\infty$, is covered in Theorem~\ref{th:whole-space-zero-average2} under the stronger assumption that $M$ is a Gaussian. For the formulation of a result corresponding to the cancellation of higher order moments, we introduce the set $\R_\ell[X,V]$ of polynomials of order at most $\ell$ in the variables $X$, $V\in\R^d$ (the sum of the degrees in $X$ and in $V$ is at most $\ell$). We also need that the kernel of the collision operator is spanned by a Gaussian function in order to keep polynomial spaces invariant. This means that for any $P \in \R_\ell[X,V]$, one has $\( \sfL - \sfT \)(PM) \in \R_\ell[X,V] M$. Since the transport operator mixes both variables $x$ and $v$, one needs moments with respect to both $x$ and $v$ variables.
\begin{theorem}\phantomsection\label{th:whole-space-zero-average2} In Case (a), let $M$ be the normalized Gaussian \eqref{Gaussian}. In Case (b), we assume that $\sigma\equiv 1$. Let $k\in(d,\infty]$, $\ell\in\N$ and assume that the initial datum $f_0\in\mathrm L^1(\R^d\times\R^d)$ is such that
\be{ZeroAverage}\iint_{\R^d\times \R^d}f_0(x,v)\, P(x,v)\,dx\,dv=0
\ee
for all $P \in \R_\ell[X,V]$. Then there exists a constant $c_k>0$ such that any solution $f$ of~\eqref{eq:model} with initial datum $f_0$ satisfies, for all $t\ge 0$,
\[
\left\|f(t,\cdot,\cdot)\right\|_{\mathrm L^2(dx\,d\gamma_k)}^2\le c_k\,\frac{\left\|f_0\right\|_{\mathrm L^2(d\gamma_{k+2};\,\mathrm L^1(dx))}^2+\left\|f_0\right\|^2_{\mathrm L^2\(d\gamma_k;\,\mathrm L^1(|x|\,dx)\)}+\left\|f_0\right\|^2_{\mathrm L^2(dx\,d\gamma_k)}}{(1+t)^{d/2+1 + \ell}}.
\]
\end{theorem}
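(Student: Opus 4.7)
The plan is to work in Fourier variables $\xi$ dual to $x$ and to refine the mode-by-mode analysis underlying \refres{Theorem}{th:whole-space-mode} and \refres{Theorem}{th:whole-space-zero-average}. The cancellations~\eqref{ZeroAverage} translate, via $\partial_\xi^\alpha\hat f_0(0,v)=(-i)^{|\alpha|}\int x^\alpha f_0(x,v)\,dx$, into the pointwise identity
\[
\int_{\R^d}v^\beta\,\partial_\xi^\alpha\hat f_0(0,v)\,dv=0\qquad\text{for all }|\alpha|+|\beta|\le\ell.
\]
Under the Gaussian or $\sigma\equiv 1$ hypothesis I will show this converts into $|\langle\psi_L(\xi,\cdot),\hat f_0(\xi,\cdot)\rangle|\le C|\xi|^{\ell+1}\|f_0\|_\star$, where $\psi_L(\xi,\cdot)$ is the left eigenfunction associated to the principal (slow) eigenvalue of $\sfL-i(v\cdot\xi)$ and $\|\cdot\|_\star$ is the combined weighted norm on the right-hand side of the statement. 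Integrating against the heat-like factor $e^{-c|\xi|^2 t}$ at low frequencies then supplies the extra $(1+t)^{-\ell}$ relative to \refres{Theorem}{th:whole-space-zero-average}.

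The key ingredient is the spectral expansion of $\sfL-i(v\cdot\xi)$ around $\xi=0$. The microscopic coercivity~\eqref{HH1} provides a spectral gap for $\sfL$ with simple eigenvalue $0$ and eigenfunction $M$, so analytic perturbation theory produces, for $|\xi|\le\xi_0$, a simple eigenvalue $\mu(\xi)=-\kappa|\xi|^2+O(|\xi|^3)$ with right/left eigenfunctions $\psi_R(\xi,v)=M(v)+O(\xi)$ and $\psi_L(\xi,v)=1+O(\xi)$. The special hypothesis on the collision operator is used to show by induction that the Taylor coefficients in
\[
\psi_R(\xi,v)=M(v)\sum_\gamma\xi^\gamma P_\gamma(v),\qquad \psi_L(\xi,v)=\sum_\gamma\xi^\gamma Q_\gamma(v),
\]
are polynomials in $v$ with $\deg P_\gamma,\deg Q_\gamma\le|\gamma|$: in Case~(a) the reduced resolvent of $\sfL$ is, up to conjugation by $M$, that of the Ornstein--Uhlenbeck operator $\Delta_v-v\cdot\nabla_v$, while in Case~(b) with $\sigma\equiv 1$ one has $\sfL f=M\rho_f-f$ and the reduced resolvent on $\{M\}^\perp$ is simply $-\mathrm{Id}$; in both cases multiplying by $iv$ and applying the reduced resolvent preserves the filtration by polynomial degree in $v$.

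Inserting both Taylor expansions into $\langle\psi_L(\xi,\cdot),\hat f_0(\xi,\cdot)\rangle$ turns it into a double sum over $(\alpha,\gamma)$; by the cancellation identity every term with $|\alpha|+|\gamma|\le\ell$ vanishes, and Taylor remainder estimates control the tail by $|\xi|^{\ell+1}$ times the weighted $\mathrm L^1$-in-$x$ norms appearing in the statement (the $\gamma_{k+2}$ weight and the $|x|$ weight enter, exactly as in \refres{Theorem}{th:whole-space-zero-average}, when bounding polynomial $v$-factors against $\mathrm L^2(d\gamma_k)$ and polynomial $x$-factors against $\mathrm L^1(|x|\,dx)$). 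Writing $\hat f(t,\xi,\cdot)=e^{\mu(\xi)t}\Pi(\xi)\hat f_0(\xi,\cdot)+R(t,\xi,\cdot)$ with spectral projector $\Pi(\xi)$ and a remainder $R$ that inherits from the spectral gap exponential-in-$t$ decay uniform for $|\xi|\le\xi_0$, I obtain $\|\hat f(t,\xi,\cdot)\|_{\mathrm L^2(d\gamma_\infty)}^2\lesssim|\xi|^{2(\ell+1)}e^{-2c|\xi|^2 t}\|f_0\|_\star^2$ on $|\xi|\le\xi_0$; on $|\xi|\ge\xi_0$ the exponential decay from \refres{Theorem}{th:whole-space-mode} is more than sufficient. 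The Gaussian integral $\int_{\R^d}|\xi|^{2(\ell+1)}e^{-2c|\xi|^2 t}\,d\xi\sim(1+t)^{-(d/2+\ell+1)}$ produces the announced rate, and passing from $\mathrm L^2(d\gamma_\infty)$ to $\mathrm L^2(d\gamma_k)$ for finite $k>d$ is handled as in \refres{Theorem}{th:whole-space-mode} by the factorization method of~\cite{MR3779780}.

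The main obstacle will be verifying the polynomial character of the Taylor coefficients of $\psi_L,\psi_R$ together with uniform-in-$\xi$ bounds on the spectral remainder $R(t,\xi,\cdot)$. Both reduce to quantitative control of $(\sfL|_{\{M\}^\perp})^{-1}$ restricted to $v$-polynomial subspaces, and it is precisely for this computation that the Gaussian in Case~(a) or the constant $\sigma\equiv 1$ in Case~(b) is indispensable; once this structural input is in place the remaining Fourier bookkeeping is quantitative but essentially routine.
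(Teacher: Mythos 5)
Your route is genuinely different from the paper's. The paper never constructs a dispersion branch: it (i) shows that all moments of order at most $\ell$ are conserved in time, (ii) proves by induction that the spatial polynomial averages $\int_{\R^d}x^\alpha f\,dx$ decay like $(1+t)^{|\alpha|}e^{-\lambda t}$ in weighted $\mathrm L^2(dv)$ norms (Duhamel for $\partial_tP[f]=\sfL P[f]-\int_{\R^d}(v\cdot\nabla_xP)f\,dx$, plus \refres{Corollary}{th:fac-appl} and the shrinking \refres{Proposition}{prop:shrink} to handle the weights), and (iii) subtracts $\sum_{|\alpha|\le\ell}\tfrac1{\alpha!}X^\alpha[f_{t_0}]\,\partial^\alpha\varphi$ at an intermediate time $t_0$, so that the remainder has Fourier transform $O(|\xi|^{\ell+1})$ near $\xi=0$ and solves a kinetic equation whose source is a telescoping sum carried only by the top moments $|\alpha|=\ell$; the mode-by-mode hypocoercive bound $e^{-\mu_\xi t/2}$ of \refres{Corollary}{lec14-modelemm}/\refres{Corollary}{th:fac-appl} then gives the rate. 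Your plan instead diagonalizes each Fourier mode in the spirit of Ellis--Pinsky and encodes the cancellations once and for all as orthogonality of $\hat f_0(\xi,\cdot)$ to the left eigenfunction of the slow branch up to order $\ell$; this would subsume the paper's Steps 1--2 in the single factor $e^{\mu(\xi)t}\langle\psi_L,\hat f_0\rangle$, and your identification of where the Gaussian/$\sigma\equiv1$ hypothesis enters (invariance of the polynomial filtration under the reduced resolvent) is exactly the role it plays in the paper.

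There are, however, genuine gaps. The decisive one is the asserted uniform-in-$\xi$ exponential decay of the non-slow part $R(t,\xi,\cdot)$. Since $(\mathrm{Id}-\Pi(\xi))\hat f_0(\xi,\cdot)$ is of size $O(1)$, not $O(|\xi|^{\ell+1})$, the whole improvement hinges on $R$ decaying like $e^{-\sigma t}$ with $\sigma>0$ independent of $\xi$; otherwise the low-frequency contribution only yields $t^{-d/2}$. But $\sfL-i\,(v\cdot\xi)$ is non-self-adjoint, so locating the spectrum of its restriction to the complementary subspace does not by itself give semigroup decay, and the only quantitative decay available in the paper, \refres{Corollary}{lec14-modelemm}, degenerates as $e^{-c|\xi|^2t}$ precisely because the slow branch has not been removed. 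Proving the uniform gap on the complement is essentially a new hypocoercivity-type estimate (uniform resolvent bounds plus Gearhart--Pr\"uss, or a Lyapunov functional built on $(\mathrm{Id}-\Pi(\xi))\hat f$), and it does not ``reduce to quantitative control of $(\sfL|_{\{M\}^\perp})^{-1}$ on polynomial subspaces'': the remainder lives in the full complementary subspace. Two further points need repair: the order-$(\ell+1)$ Taylor remainder of $\xi\mapsto\langle\psi_L(\xi,\cdot),\hat f_0(\xi,\cdot)\rangle$ requires $x$-moments of $f_0$ up to order about $\ell+1$ and velocity weights of order $\gamma_{k+2\ell+2}$ (these appear in the paper's intermediate estimates), not merely the $|x|$- and $\gamma_{k+2}$-weighted norms you invoke; and the final ``enlargement by factorization'' does not apply as stated, since \refres{Proposition}{th:factorization} transfers semigroup-norm bounds while your improved bound is data-dependent (it exploits cancellations of $\hat f_0$), so for finite $k$ you must either run the spectral construction directly in $\mathrm L^2(d\gamma_k)$ or interlace the projector decomposition with the enlargement, as the paper does by carrying the $\gamma_k$ weights through every step.
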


The {\bf outline of this paper} goes as follows. In Section~\ref{sec:mode-mode-hypoc}, we slightly strengthen the \emph{abstract hypocoercivity} result of~\cite{DMS-2part} by allowing complex Hilbert spaces and by providing explicit formulas for the coefficients in the decay rate (Proposition~\ref{theo:DMS}). In Corollary~\ref{lec14-modelemm}, this result is applied for fixed $\xi$ to the Fourier transformed problem~\eqref{eq:hat}, where integrals are computed with respect to the measure $d\gamma_\infty$ in the velocity variable~$v$. Since the frequency $\xi$ can be considered as a parameter, we shall speak of a \emph{mode-by-mode hypocoercivity} 
result. It provides exponential decay, however with a rate deteriorating as $\xi\to 0$.

In Section~\ref{sec:factorization}, we state a special case (Proposition~\ref{th:factorization}) of the \emph{factorization} result of~\cite{MR3779780} with explicit constants which corresponds to an \emph{enlargement} of the space, and also a \emph{shrinking} result (Proposition~\ref{prop:shrink}) which will be useful in Section~\ref{Sec:ImprovedDecayRates}. By the enlargement result, the estimate corresponding to the exponential weight $\gamma_\infty$ is extended in Corollary~\ref{th:fac-appl} to larger spaces corresponding to the algebraic weights~$\gamma_k$ with $k\in(d,\infty)$. As a straightforward consequence, in Section~\ref{Sec:AsymptoticFourier}, we recover an \emph{exponential convergence rate} in the case of the flat torus~$\T^d$ (Corollary~\ref{Thm:torus}), and then give a first proof of the \emph{algebraic decay rate} of Theorem~\ref{th:whole-space-mode} in the whole space without confinement.

In Section~\ref{Sec:WholeSpaceNash}, an hypocoercivity method, where the Poincar\'e inequality, or the so-called \emph{macroscopic coercivity} condition, is replaced by the \emph{Nash inequality}, provides an alternative proof of Theorem~\ref{th:whole-space-mode}. Such a direct approach is also applicable to problems with non-constant coefficients like scattering operators with $x$-dependent scattering rates $\sigma$, or Fokker-Planck operators with $x$-dependent diffusion constants like $\nabla_v\cdot\big(\mathcal D(x)\,M\,\nabla_v(M^{\kern 0.5pt-1}\,f)\big)$.

The \emph{improved algebraic decay rates} of Theorem~\ref{th:whole-space-zero-average}
and Theorem~\ref{th:whole-space-zero-average2} are obtained by direct Fourier estimates in Section~\ref{Sec:Fourier}. As we shall see in the~\ref{Sec:Green}, the rates of Theorem~\ref{th:whole-space-mode} are optimal: the decay rate is the rate of the heat equation on $\R^d$. Our method is consistent with the \emph{diffusion limit} and provides estimates which are asymptotically uniform in this regime: see~\ref{sec:consistency}. We also check that the results of Theorem~\ref{th:whole-space-zero-average} and Theorem~\ref{th:whole-space-zero-average2} are uniform in the diffusive limit in \ref{sec:consistency}.\smallskip

We conclude this introduction by a brief {\bf review of the literature:} On the whole Euclidean space, we refer to~\cite{2017arXiv170610034V} for recent lecture notes on available techniques for capturing the large time asymptotics of the heat equation. Some of our results make a clear link with the heat flow seen as the diffusion limit of the kinetic equation. We also refer to~\cite{Iacobucci2017} for recent results on the diffusion limit, or overdamped limit (see \ref{sec:consistency}).

The mode-by-mode analysis is an extension of the hypocoercivity theory of~\cite{DMS-2part}, which has been inspired by~\cite{MR2215889}, but is also close to the Kawashima compensating function method: see~\cite{MR1057534} and \cite[Chapter 3, Section~3.9]{MR1379589}. We also refer to~\cite{Duan2011} where the Kawashima approach is applied to a particular case of the scattering model~(b).

The word \emph{hypocoercivity} was coined by T.~Gallay and widely
disseminated in the context of kinetic theory by
C.~Villani. In~\cite{Mouhot-Neumann,MR2275692,Mem-villani}, the method
deals with large time properties of the solutions by considering a
$\mathrm H^1$-norm (in $x$ and~$v$ variables) and taking into account
cross-terms. This is very well explained
in~\cite[Section~3]{MR2275692}, but was already present in earlier
works like~\cite{MR2034753}. Hypocoercivity theory is inspired by and
related to the earlier \emph{hypoellipticity} theory. The latter has a
long history in the context of the kinetic Fokker-Planck equation. One
can refer for instance to~\cite{MR1969727,MR2034753} and much earlier
to H\"ormander's theory~\cite{MR0222474}. The seed for such an
approach can even be traced back to Kolmogorov's computation of
Green's kernel for the kinetic Fokker-Planck equation
in~\cite{MR1503147}, which has been reconsidered in~\cite{MR0168018}
and successfully applied, for instance, to the study of the
Vlasov-Poisson-Fokker-Planck system in~\cite{MR1052014,MR1200643}.

Linear Boltzmann equations and BGK (Bhatnagar-Gross-Krook, see~\cite{bhatnagar1954model}) models also have a long history: we refer to~\cite{DegGouPou,Caceres-Carrillo-Goudon} for key mathematical properties, and to~\cite{Mouhot-Neumann,MR2215889} for first hypocoercivity results. In this paper we will mostly rely on~\cite{Dolbeault2009511,DMS-2part}. However, among more recent contributions, one has to quote~\cite{MR3479064,achleitner2016linear,BouinHoffmann} and also an approach based on the Fisher information which has recently been implemented in~\cite{2017arXiv170204168E,2017arXiv170310504M}.

With the \emph{exponential weight} $\gamma_\infty=M^{\kern 0.5pt-1}$, Corollary~\ref{Thm:torus} can be obtained directly by the method of~\cite{DMS-2part}. In this paper we also obtain a result for weights with polynomial growth in the velocity variable based on~\cite{MR3779780}. For completeness, let us mention that recently the exponential growth issue was overcome for the Fokker-Planck case in~\cite{Kavian,MisMou} by a different method. The improved decay rates established in Theorem~\ref{th:whole-space-zero-average} and in Theorem~\ref{th:whole-space-zero-average2} generalize to kinetic models similar results known for the heat equation, see for instance~\cite[Remark~3.2~(7)]{MisMou} or~\cite{Bartier201176}.

\section{Mode-by-mode hypocoercivity}
\label{sec:mode-mode-hypoc}

Let us consider the evolution equation
\be{EqnEvol}
\frac{dF}{dt}+\mathsf TF=\mathsf{L}F\,,
\ee
where $\mathsf T$ and $\mathsf{L}$ are respectively a general \emph{transport operator} and a general \emph{linear collision operator}. We shall use the abstract approach of~\cite{DMS-2part}. Although the extension of the method to Hilbert spaces over complex numbers is rather straightforward, we carry it out here for completeness. For details on the Cauchy problem or, \emph{e.g.}, on the domains of the operators, we refer to~\cite{DMS-2part}. Notice that we do not ask that $\mathsf L$ is a Hermitian operator but simply assume that $\mathsf L^*\mathsf A=0$.
\begin{proposition}\phantomsection\label{theo:DMS} Let $\mathsf{L}$
  and $\mathsf T$ be closed unbounded linear operators on the complex
  Hilbert space $(\mathcal{H},\langle\cdot,\cdot\rangle)$ with dense
  domains $\cD(L)$ and $\cD(T)$. Assume that $\mathsf T$ is
  anti-Hermitian. Let $\mathsf\Pi$ be the orthogonal projection onto
  the null space of \,$\mathsf{L}$ and define
  \[
    \mathsf{A}:=\big(1+(\mathsf T\mathsf\Pi)^*\mathsf
    T\mathsf\Pi\big)^{-1}(\mathsf T\mathsf\Pi)^*
  \]
  where ${}^*$ denotes the adjoint with respect to
  $\langle\cdot,\cdot\rangle$. We assume that $\mathsf L^*\mathsf A=0$
  and that there are \nc positive constants $\lambda_m$, $\lambda_M$,
  and $C_M$ exist, such that, for any $F\in\mathcal{H}$, the following
  properties hold:\\
  $\rhd$ \emph{microscopic coercivity:} \be{A1}
  -\,\langle\mathsf{L}F,F\rangle\ge\lambda_m\,\|(1-\mathsf\Pi)F\|^2,
  \quad \forall \, F \in \cD(L)\,,\tag{A1} \ee $\rhd$
  \emph{macroscopic coercivity:} \be{A2} \|\mathsf T\mathsf\Pi
  F\|^2\ge\lambda_M\,\|\mathsf\Pi F\|^2, \quad \forall \, F \in
  \cD(T)\,,\tag{A2} \ee $\rhd$ \emph{parabolic macroscopic dynamics:}
  \be{A3} \mathsf\Pi\mathsf T\mathsf\Pi\,F=0, \quad \forall \, F \in
  \cD(T)\,,\tag{A3} \ee $\rhd$ \emph{bounded auxiliary operators:}
  \be{A4} \|\mathsf{AT}(1-\mathsf\Pi)F\|+\|\mathsf{AL}F\|\le
  C_M\,\|(1-\mathsf\Pi)F\|, \quad \forall \, F \in \cD(L) \cap
  \cD(T)\,.\tag{A4} \ee

  Then $L-T$ generates a $C_0$-semigroup and for any $t\ge0$, we have
  \be{decay-const} \left\|e^{(\mathsf{L}-\,\mathsf
      T)\,t}\right\|^2\le3\,e^{-\lambda\,t}\quad\mbox{where}\quad\lambda=\frac{\lambda_M}{3\,(1+\lambda_M)}\min\left\{1,\lambda_m,\frac{\lambda_m\,\lambda_M}{(1+\lambda_M)\,C_M^2}\right\}\,.
  \ee
\end{proposition}

\begin{proof} For some $\delta>0$ to be determined later, the Lyapunov functional
\[
\mathsf{H}[F]:=\tfrac12\,\|F\|^2+\delta\,\re\langle\mathsf{A}F,F\rangle
\]
is such that $\frac d{dt}\mathsf{H}[F]=-\,\mathsf{D}[F]$ if $F$ solves~\eqref{EqnEvol}, with
\[
  \mathsf{D}[F]:=-\,\langle\mathsf{L}F,F\rangle+\delta\,\langle\mathsf{AT}\mathsf\Pi
  F,F\rangle+\delta\,\re\langle\mathsf{AT}(1-\mathsf\Pi)F,F\rangle
  -\delta\,\re\langle\mathsf{TA}F,F\rangle
  -\,\delta\,\re\langle\mathsf{AL}F,F\rangle\,.
\]
Note that we have used the fact that
$\re\langle\mathsf{A}F,\mathsf{L}F\rangle = 0$ because of the
assumption $\mathsf L^*\mathsf A=0$, and also that
$\langle\mathsf{AT}\mathsf\Pi F,F\rangle$ is real because
$\mathsf{AT}\mathsf\Pi$ is self-adjoint by construction. Since the
Hermitian operator $\mathsf{AT}\mathsf\Pi$ can be interpreted as the
application of the map $z\mapsto(1+z)^{-1}\,z$ to
$(\mathsf T\mathsf\Pi)^*\mathsf T\mathsf\Pi$ and as a consequence of
the spectral theorem~\cite[Theorem~VII.2, p.~225]{MR751959}, the
conditions~\eqref{A1} and~\eqref{A2} imply that
\[
  -\,\langle\mathsf{L}F,F\rangle
  +\delta\,\langle\mathsf{AT}\mathsf\Pi F,F\rangle\ge\lambda_m\,
  \|(1-\mathsf\Pi)F\|^2
  +\frac{\delta\,\lambda_M}{1+\lambda_M}\,\|\mathsf\Pi F\|^2\,.
\]
As in~\cite[Lemma~1]{DMS-2part}, if $G=\mathsf{A}F$, \emph{i.e.},
$G+(\mathsf T\mathsf\Pi)^*\mathsf T\mathsf\Pi\,G=(\mathsf
T\mathsf\Pi)^*F$, one has
\[
  \|\mathsf AF\|^2+\|\mathsf{TA}F\|^2=
  \langle G,G+(\mathsf T\mathsf\Pi)^*\,\mathsf T\mathsf\Pi\,G\rangle
  =\langle G,(\mathsf T\mathsf\Pi)^*F\rangle
  =\langle\mathsf{TA}F,\mathsf{(1-\Pi)}F\rangle
\]
where we have used $A = \Pi A$ and $\Pi T\Pi=0$. Using
$\left\vert \langle\mathsf{TA}F,\mathsf{(1-\Pi)}F\rangle \right\vert
\leq \|\mathsf{TA}F\|^2 + \frac14\,\|\mathsf{(1-\Pi)}F\|^2$,
one gets \be{A-bound} \|\mathsf AF\|^2
\leq \frac14\,\|\mathsf{(1-\Pi)}F\|^2\,,
\ee which implies that
$\left|\re\langle\mathsf{A}F,F\rangle\right|\le \| AF \| \| F \| \le
\frac12\,\|F\|^2$ and
provides us with the norm equivalence of $\mathsf{H}[F]$ and
$\|F\|^2$, \be{H-norm}
\frac12\,(1-\delta)\,\|F\|^2\le\mathsf{H}[F]\le\frac12\,(1+\delta)\,\|F\|^2\,.
\ee With $X:=\|(1-\mathsf\Pi)F\|$ and $Y:=\|\mathsf\Pi F\|$, it
follows from~\eqref{A4} that
\[
\mathsf{D}[F]\ge(\lambda_m-\delta)\,X^2+\frac{\delta\,\lambda_M}{1+\lambda_M}\,Y^2-\delta\,C_M\,X\,Y\,.
\]
The choice $\delta=\frac12\,\min\left\{1,\lambda_m,\frac{\lambda_m\,\lambda_M}{(1+\lambda_M)\,C_M^2}\right\}$ implies that
\[
\mathsf{D}[F]\ge\frac{\lambda_m}4\,X^2+\frac{\delta\,\lambda_M}{2\,(1+\lambda_M)}\,Y^2\ge\frac14\,\min\left\{\lambda_m,\frac{2\,\delta\,\lambda_M}{1+\lambda_M}\right\}\|F\|^2\ge\frac{2\,\delta\,\lambda_M}{3\,(1+\lambda_M)}\,\mathsf{H}[F]\,.
\]
With $\lambda$ defined in~\eqref{decay-const}, using $\delta\le1/2$ and $(1+\delta)/(1-\delta)\le3$, we get
\[
\|F(t)\|^2\le\frac2{1-\delta}\,\mathsf{H}[F](t)\le\frac{1+\delta}{1-\delta}\,e^{-\lambda\,t}\,\|F(0)\|^2\le3\,e^{-\lambda\,t}\,\|F(0)\|^2\,.
\]
\end{proof}

For any fixed $\xi\in\R^d$, let us apply Proposition~\ref{theo:DMS} to~\eqref{eq:hat} with $F=\hat f$ and
\[
\mathcal H=\mathrm L^2\(d\gamma_\infty\)\,,\quad\|F\|^2=\int_{\R^d}|F|^2\,d\gamma_\infty\,,\quad
\mathsf\Pi F=M\int_{\R^d}F\,dv=M\,\rho_F\,,\quad\sfT F=i\,(v\cdot\xi)\,F \,.
\]
Here we are in a mode-by-mode framework in which the transport operator $\sfT$ is a simple multiplication operator.
\begin{corollary}\phantomsection\label{lec14-modelemm} Assume~\eqref{H1}--\eqref{H4}, and take $\xi\in\R^d$. If $\hat f$ is a solution of~\eqref{eq:hat} such that $\hat f_0(\xi,\cdot)\in\mathrm L^2(d\gamma_\infty)$, then for any $t\ge0$, we have
\[
\left\|\hat f(t,\xi,\cdot)\right\|_{\mathrm L^2\(d\gamma_\infty\)}^2
\le 3\,e^{-\,\mu_\xi\,t}\left\|\hat f_0(\xi,\cdot)\right\|_{\mathrm L^2\(d\gamma_\infty\)}^2 \,,
\]
where
\be{eq:mu_xi}
\mu_\xi:=\frac{\Lambda\,|\xi|^2}{1+|\xi|^2}\quad\mbox{and}\quad\Lambda=\frac13\,\min\big\{1,\Theta\big\}\,\min\left\{1,\frac{\lambda_m\,\Theta^2}{K+\Theta\,\kappa^2}\right\} \,,
\ee
with
\be{Theta}
\Theta:=\int_{\R^d}(v\cdot\mathsf e)^2\,M(v)\,dv \,,\quad K:=\int_{\R^d}(v\cdot\mathsf e)^4\,M(v)\,dv\,,\quad
\theta := \frac4d\int_{\R^d}\left|\nabla_v\sqrt M\right|^2\,dv \,,
\ee
for an arbitrary $\mathsf e\in\mathbb S^{d-1}$, and with $\kappa=\sqrt\theta$ in Case (a) and 
$\kappa=2\,\overline\sigma\,\sqrt\Theta$ in Case (b). \end{corollary}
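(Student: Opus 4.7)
The plan is to apply \refres{Proposition}{theo:DMS} to the Fourier-transformed equation \eqref{eq:hat} for each fixed $\xi\in\R^d$, regarding it as an instance of \eqref{EqnEvol} on the Hilbert space $\mathcal{H} = \mathrm L^2(d\gamma_\infty)$ with $\mathsf T = i\,(v\cdot\xi)$ (anti-Hermitian), the original $\mathsf L$ (Hermitian, with $\ker\mathsf L = \R\,M$), and the orthogonal projection $\mathsf\Pi F = M\,\rho_F$. With this setup, assumption \eqref{A1} is precisely the microscopic coercivity \eqref{HH1}, and the parabolic identity \eqref{A3} reduces to $\int v\,M\,dv = 0$, which follows from the radial symmetry of $M$ in (H1). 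A direct computation gives $\|\mathsf T\mathsf\Pi F\|^2 = |\rho_F|^2\int(v\cdot\xi)^2 M\,dv = \Theta\,|\xi|^2\,\|\mathsf\Pi F\|^2$, so \eqref{A2} holds with $\lambda_M = \Theta\,|\xi|^2$.

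The substantive step is to verify \eqref{A4} with an explicit $C_M$. Using $\mathsf\Pi^* = \mathsf\Pi$, $\mathsf T^* = -\mathsf T$, and the identity $(\mathsf T\mathsf\Pi)^*\mathsf T\mathsf\Pi = \Theta\,|\xi|^2\,\mathsf\Pi$, I first obtain the closed-form expression
\[
\mathsf{A}F \;=\; \frac{-\,i\,M}{1+\Theta\,|\xi|^2}\,\int_{\R^d}(v\cdot\xi)\,F\,dv\,.
\]
A weighted Cauchy--Schwarz estimate on $\int(v\cdot\xi)^2(1-\mathsf\Pi)F\,dv$ then yields $\|\mathsf{A}\mathsf T(1-\mathsf\Pi)F\| \le \sqrt K\,|\xi|^2\,(1+\Theta\,|\xi|^2)^{-1}\,\|(1-\mathsf\Pi)F\|$. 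For the remaining bound $\|\mathsf{AL}F\|$, I exploit $M\in\ker\mathsf L$ to rewrite $\mathsf L F = \mathsf L g$ with $g := (1-\mathsf\Pi)F$, reducing matters to controlling $|\int(v\cdot\xi)\mathsf L g\,dv|$. In Case (a), integration by parts together with $M\,\nabla_v(M^{-1}g) = \nabla_v g - g\,\nabla_v(\log M)$ collapses this to $|\xi\cdot\int g\,\nabla_v(\log M)\,dv|$, which Cauchy--Schwarz and the isotropy identity $\int(\xi\cdot\nabla_v\sqrt M)^2\,dv = |\xi|^2\,\theta/4$ bound by $\sqrt\theta\,|\xi|\,\|g\|$. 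In Case (b), the gain and loss parts $\iint(v\cdot\xi)\,\sigma(v,v')\,M(v)\,g(v')\,dv'\,dv$ and $\int(v\cdot\xi)\,g(v)\,\nu(v)\,dv$, with $\nu(v) := \int\sigma(v,v')\,M(v')\,dv'$, are each controlled by $\overline\sigma\sqrt\Theta\,|\xi|\,\|g\|$ via the uniform bound $\sigma \le \overline\sigma$ from (H2) and Cauchy--Schwarz, yielding the constant $\kappa = 2\,\overline\sigma\sqrt\Theta$. In either case $\|\mathsf{AL}F\| \le \kappa\,|\xi|\,(1+\Theta\,|\xi|^2)^{-1}\,\|(1-\mathsf\Pi)F\|$, so \eqref{A4} holds with $C_M = (\sqrt K\,|\xi|^2 + \kappa\,|\xi|)/(1+\Theta\,|\xi|^2)$.

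Plugging $\lambda_M = \Theta\,|\xi|^2$ and this $C_M$ into \eqref{decay-const} gives a rate depending intricately on $\xi$; the final task is to reduce it to the announced form $\mu_\xi = \Lambda\,|\xi|^2/(1+|\xi|^2)$. This follows from the two elementary rational inequalities
\[
\frac{\Theta\,|\xi|^2}{1+\Theta\,|\xi|^2} \;\ge\; \frac{\min\{1,\Theta\}\,|\xi|^2}{1+|\xi|^2}\,,\qquad \frac{1+\Theta\,|\xi|^2}{K\,|\xi|^2+\kappa^2} \;\ge\; \frac{\Theta}{K+\Theta\,\kappa^2}\,,
\]
both checked by cross-multiplication. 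The first reshapes the prefactor $\lambda_M/(1+\lambda_M)$; the second absorbs $(1+\Theta|\xi|^2)/C_M^2$ into the $\xi$-independent ratio $\Theta^2/(K+\Theta\kappa^2)$ that appears in $\Lambda$. The main obstacle is the Case (b) estimate on $\mathsf{AL}F$: the non-locality of the scattering operator forces the separate treatment of gain and loss, and it is precisely the uniform upper bound in (H2) that produces the factor $2\,\overline\sigma$ in $\kappa$ and therefore permits the unified formula for $\Lambda$ across the two collision models.
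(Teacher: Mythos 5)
Your argument is correct and follows essentially the paper's own route: you apply \refres{Proposition}{theo:DMS} mode by mode on $\mathrm L^2(d\gamma_\infty)$ with $\lambda_M=\Theta\,|\xi|^2$ and $C_M=\big(\kappa\,|\xi|+\sqrt K\,|\xi|^2\big)/\big(1+\Theta\,|\xi|^2\big)$, verifying \eqref{A1}--\eqref{A4} exactly as the paper does (your gain/loss splitting in Case (b) produces the same constant $2\,\overline\sigma\,\sqrt\Theta$ that the paper gets from the cruder bound $\|\mathsf L F\|\le 2\,\overline\sigma\,\|(1-\mathsf\Pi)F\|$ combined with the bound on $\mathsf A$). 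The only slip is in your second ``elementary rational inequality'': since $C_M^2$ involves $(\kappa+\sqrt K\,|\xi|)^2$ and not $\kappa^2+K\,|\xi|^2$, the inequality you state is weaker than what the reduction requires; the needed version, $(1+\Theta\,|\xi|^2)\,(K+\Theta\,\kappa^2)\ge\Theta\,(\kappa+\sqrt K\,|\xi|)^2$, also follows by cross-multiplication because the difference equals $(\sqrt K-\Theta\,\kappa\,|\xi|)^2\ge0$, so the stated $\Lambda$ is recovered with no change of constants.
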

\begin{proof} We check that the assumptions of
  Proposition~\ref{theo:DMS} are satisfied with \hbox{$F=\hat f$}. The
  property $\mathsf L^*\mathsf A=0$ is a consequence of the mass
  conservation $\int_{\R^d} \sfL f\,dv = 0$ because
  $\mathsf{\Pi A}=\mathsf A$. Assumption~\eqref{H4}
  implies~\eqref{A1}. Concerning the macroscopic
  coercivity~\eqref{A2}, since
\[
\mathsf{T\Pi}F=i\,(v\cdot\xi)\,\rho_F\,M\,,
\]
one has
\[
\|\mathsf{T\Pi}F\|^2 =|\rho_F|^2 \int_{\R^d}|v\cdot\xi|^2\,M(v)\,dv\;=\Theta\,|\xi|^2\,|\rho_F|^2=\Theta\,|\xi|^2\,\|\mathsf{\Pi}F\|^2\,,
\]
and thus~\eqref{A2} holds with $\lambda_M=\Theta\,|\xi|^2$. By
assumption $M(v)$ depends only on~$|v|$, so it is \emph{unbiased}:
$\int_{\R^d}v\,M(v)\,dv=0$, which means that~\eqref{A3} holds.
\smallskip

Let us now prove~\eqref{A4}. Since
$(\mathsf T\mathsf\Pi)^*F=-\,\mathsf\Pi\,\mathsf TF=-\,i\,(
\xi\cdot\int_{\R^d}v'\,F(v')\,dv')\,M$, we obtain that
\[
\big(1+(\mathsf T\mathsf\Pi)^*\mathsf T\mathsf\Pi\big)\,\rho\,M=\left(1+\int_{\R^d}\big(\xi\cdot v'\big)^2\,M(v')\,dv'\right)\,\rho\,M=\left(1+\Theta\,|\xi|^2\right)\,\rho\,M
\]
and the operator $\mathsf{A}$, defined in Proposition \ref{theo:DMS}, is given mode-by-mode by
\[
\mathsf{A}F=\frac{-\,i\,\xi\cdot\int_{\R^d}v'\,F(v')\,dv'}{1+\Theta\,|\xi|^2}\,M\,.
\]
As a consequence, $\mathsf{A}$ satisfies the estimate
\begin{multline*}
\|\mathsf{A}F\|=\|\mathsf{A}\mathsf{(1-\Pi)}F\|\le\frac1{1+\Theta\,|\xi|^2}\int_{\R^d}\frac{|\mathsf{(1-\Pi)}F|}{\sqrt M}\;|v\cdot\xi|\,\sqrt M\,dv\\
\le\frac{\|\mathsf{(1-\Pi)}F\|}{1+\Theta\,|\xi|^2}\(\int_{\R^d}(v\cdot\xi)^2\,M\,dv\)^{1/2}=\frac{\sqrt\Theta\,|\xi|}{1+\Theta\,|\xi|^2}\,\|\mathsf{(1-\Pi)}F\|\,.
\end{multline*}
In Case~(b) the collision operator $\mathsf{L}$ is obviously bounded:
\[
\|\mathsf{L}F\|\le 2\,\overline\sigma{\kern 0.5pt}\,\|\mathsf{(1-\Pi)}F\|
\]
and, as a consequence,
\[\
\|\mathsf{AL}F\|\le\frac{2\,\overline\sigma\,\sqrt\Theta\,|\xi|}{1+\Theta\,|\xi|^2}\,\|\mathsf{(1-\Pi)}F\|\,.
\]
We also notice that $\mathsf L^*\mathsf A=0$ according to~\eqref{H3}. For estimating $\mathsf{AL}$ in Case~(a), we note that
\[
\int_{\R^d} v\,\mathsf{L}F\,dv=2\int_{\R^d}\nabla_v\sqrt M\,\frac F{\sqrt M}\,dv
\]
and obtain as above that
\[
\|\mathsf{AL}F\|\le\frac2{1+\Theta\,|\xi|^2}\int_{\R^d}\frac{|\mathsf{(1-\Pi)}F|}{\sqrt M}\;\left|\xi\cdot\nabla_v\sqrt M\right|\,dv\le\frac{\sqrt\theta\,|\xi|}{1+\Theta\,|\xi|^2}\,\|\mathsf{(1-\Pi)}F\|\,.
\]
For both cases we finally obtain
\[\label{ALF}
\|\mathsf{AL}F\|\le\frac{\kappa\,|\xi|}{1+\Theta\,|\xi|^2}\,\|\mathsf{(1-\Pi)}F\|\,.
\]

Similarly we can estimate $\mathsf{AT(1-\Pi)}F=\frac{\int_{\R^d} \left( v' \cdot \xi \right)^2 (1-\Pi)F(v')\,dv'}{1+\Theta\,|\xi|^2}\,M$ by
\begin{align*}
\|\mathsf{AT(1-\Pi)}F\| & = \frac{\left| \int_{\R^d} \left( v' \cdot \xi \right)^2 (1-\Pi)F(v')\,dv' \right|}{1+\Theta\,|\xi|^2}\\
&\le \frac{\left(\int_{\R^d} \left( v' \cdot \xi \right)^4\,M (v')\,dv' \right)^{1/2} }{1+\Theta\,|\xi|^2}\,\|\mathsf{(1-\Pi)}F\|=\frac{\sqrt{K}\,|\xi|^2}{1+\Theta\,|\xi|^2}\,\|\mathsf{(1-\Pi)}F\|\,,
\end{align*}
meaning that we have proven~\eqref{A4} with $C_M=\frac{\kappa\,|\xi|+ \sqrt{K}\,|\xi|^2}{1+\Theta\,|\xi|^2}$.

With the elementary estimates
\[
  \frac{\Theta\,|\xi|^2}{1+\Theta\,|\xi|^2}\ge
  \min\big\{1,\Theta\big\}\,\frac{|\xi|^2}{1+|\xi|^2}
  \quad\mbox{and}\quad
  \frac{\lambda_M}{(1+\lambda_M)\,C_M^2}=
  \frac{\Theta\(1+\Theta\,|\xi|^2\)}{\(\kappa+\sqrt{K}\,|\xi|\)^2}
  \ge\frac{\Theta^2}{K+\Theta\,\kappa^2}\,,
\]
the proof is completed using~\eqref{decay-const}.\end{proof}

\section{Enlarging and shrinking spaces by factorization}\label{sec:factorization}

Square integrability against the inverse of the \emph{local equilibrium} $M$ is a rather restrictive assumption on the initial datum. In this section it will be relaxed with the help of the abstract \emph{factorization method} of~\cite{MR3779780} in a simple case (factorization of order $1$). Here we state the result and sketch a proof in a special case, for the convenience of the reader. We shall then give a result based on similar computations in the opposite direction: how to establish a rate in a stronger norm, which correspond to a \emph{shrinking} of the functional space. We will conclude with an application to the problem studied in Corollary~\ref{lec14-modelemm}. Let us start by \emph{enlarging} the space.
\begin{proposition}\phantomsection\label{th:factorization} Let $\mathcal B_1$, $\mathcal B_2$ be Banach spaces and let $\mathcal B_2$ be continuously imbedded in $\mathcal B_1$, \emph{i.e.},~$\|\cdot\|_1\le c_1\|\cdot\|_2$. Let $\mathfrak B$ and $\mathfrak A+\mathfrak B$ be the generators of the strongly continuous semigroups $e^{\mathfrak B\,t}$ and $e^{(\mathfrak A+\mathfrak B)\,t}$ on $\mathcal B_1$. Assume that there are positive constants $c_2$,
$c_3$, $c_4$, $\lambda_1$ and $\lambda_2$ such that, for all $t\ge 0$,
\[
\left\|e^{(\mathfrak A+\mathfrak B)\,t}\right\|_{2\to2}\le c_2\,e^{-\lambda_2\,t}\,,\quad\left\|e^{\mathfrak Bt}\right\|_{1\to1}\le c_3\,e^{-\lambda_1\,t}\,,\quad \left\|\mathfrak A\right\|_{1\to2}\le c_4\,,
\]
where $\|\cdot\|_{i\to j}$ denotes the operator norm for linear mappings from $\mathcal B_i$ to $\mathcal B_j$.
Then there exists a positive constant $C=C(c_1,c_2,c_3,c_4)$ such that, for all $t\ge 0$,
\[
\left\|e^{(\mathfrak A+\mathfrak B)\,t}\right\|_{1\to1}\le\left\{\begin{array}{ll}
C\(1+|\lambda_1-\lambda_2|^{-1}\)\,e^{-\min\{\lambda_1,\lambda_2\}\,t}&\quad\mbox{for}\;\lambda_1\ne\lambda_2\,,\\[4pt]
C\,(1+t)\,e^{-\lambda_1\,t}&\quad\mbox{for}\;\lambda_1=\lambda_2\,.\end{array}\right.
\]
\end{proposition}
\begin{proof} Integrating the identity $\frac d{ds}\(e^{(\mathfrak A+\mathfrak B)\,s}\,e^{\mathfrak B\,(t-s)}\)=e^{(\mathfrak A+\mathfrak B)\,s}\,\mathfrak A\,e^{\mathfrak B\,(t-s)}$ with respect to $s\in [0,t]$ gives
\[
e^{(\mathfrak A+\mathfrak B)\,t}=e^{\mathfrak B\,t}+\int_0^te^{(\mathfrak A+\mathfrak B)\,s}\,\mathfrak A\,e^{\mathfrak B\,(t-s)}\,ds\,.
\]
The proof is completed by the straightforward computation
\begin{multline*}
\big\|e^{(\mathfrak A+\mathfrak B)\,t}\big\|_{1\to1}\le c_3\,e^{-\lambda_1\,t}+c_1\int_0^t\big\|e^{(\mathfrak A+\mathfrak B)\,s}\,\mathfrak A\,e^{\mathfrak B\,(t-s)}\big\|_{1\to2}\,ds\\
\le c_3\,e^{-\lambda_1\,t}+c_1\,c_2\,c_3\,c_4\,e^{-\lambda_1\,t}\int_0^te^{(\lambda_1-\lambda_2)\,s}\,ds\,.
\end{multline*}
\end{proof}

The second statement of this section is devoted to a result on the
\emph{shrinking} of the functional space. It is based on a computation
which is similar to the one of the proof of
Proposition~\ref{th:factorization}.
\begin{proposition}\phantomsection\label{prop:shrink} Let
  $\mathcal B_1$, $\mathcal B_2$ be Banach spaces and let
  $\mathcal B_2$ be continuously imbedded in $\mathcal B_1$,
  \emph{i.e.},~$\|\cdot\|_1\le c_1\|\cdot\|_2$. Let $\mathfrak B$ and
  $\mathfrak A+\mathfrak B$ be the generators of the strongly
  continuous semigroups $e^{\mathfrak B\,t}$ and
  $e^{(\mathfrak A+\mathfrak B)\,t}$ on $\mathcal B_1$. Assume that
  there are positive constants $c_2$,
$c_3$, $c_4$, $\lambda_1$ and $\lambda_2$ such that, for all $t\ge 0$,
\[
\left\|e^{(\mathfrak A+\mathfrak B)\,t}\right\|_{1\to1}\le c_2\,e^{-\lambda_1\,t}\,,\quad\left\|e^{\mathfrak Bt}\right\|_{2\to2}\le c_3\,e^{-\lambda_2\,t}\,,\quad \left\|\mathfrak A\right\|_{1\to2}\le c_4\,,
\]
where $\|\cdot\|_{i\to j}$ denotes the operator norm for linear mappings from $\mathcal B_i$ to $\mathcal B_j$.
Then there exists a positive constant $C=C(c_1,c_2,c_3,c_4)$ such that, for all $t\ge 0$,
\[
\left\|e^{(\mathfrak A+\mathfrak B)\,t}\right\|_{2\to2}\le\left\{\begin{array}{ll}
C\(1+|\lambda_2-\lambda_1|^{-1}\)\,e^{-\min\{\lambda_2,\lambda_1\}\,t}&\quad\mbox{for}\;\lambda_2\ne\lambda_1\,,\\[4pt]
C\,(1+t)\,e^{-\lambda_1\,t}&\quad\mbox{for}\;\lambda_1=\lambda_2\,.\end{array}\right.
\]
\end{proposition}
\begin{proof} Integrating the identity $\frac d{ds}\(e^{\mathfrak B\,(t-s)}e^{(\mathfrak A+\mathfrak B)\,s}\,\)=e^{\mathfrak B\,(t-s)}\,\mathfrak A\,e^{(\mathfrak A+\mathfrak B)\,s}$ with respect to $s\in [0,t]$ gives
\[
e^{(\mathfrak A+\mathfrak B)\,t}=e^{\mathfrak B\,t}+\int_0^t e^{\mathfrak B\,(t-s)}\,\mathfrak A\,e^{(\mathfrak A+\mathfrak B)\,s}\,ds\,.
\]
The proof is completed by the straightforward computation
\begin{align*}
\big\|e^{(\mathfrak A+\mathfrak B)\,t}\big\|_{2\to2}&\le c_3\,e^{-\lambda_2\,t}+\int_0^t\big\|e^{\mathfrak B\,(t-s)}\,\mathfrak A\,e^{(\mathfrak A+\mathfrak B)\,s}\big\|_{2\to2}\,ds\\
&\le c_3\,e^{-\lambda_2\,t}+c_1\int_0^t\big\|e^{\mathfrak B\,(t-s)}\,\mathfrak A\,e^{(\mathfrak A+\mathfrak B)\,s}\big\|_{1\to2}\,ds\\
&\le c_3\,e^{-\lambda_2\,t}+c_1\int_0^t\big\|e^{\mathfrak B\,(t-s)}\big\|_{2\to2}\,\big\|\mathfrak A\big\|_{1\to2}\,\big\|e^{(\mathfrak A+\mathfrak B)\,s}\big\|_{1\to1}\,ds\\
&\le c_3\,e^{-\lambda_2\,t}+c_1\,c_2\,c_3\,c_4\,e^{-\lambda_2\,t}\int_0^te^{(\lambda_2-\lambda_1)\,s}\,ds\,.
\end{align*}
\end{proof}

We will use Proposition~\ref{prop:shrink} in Section~\ref{Sec:ImprovedDecayRates}. Coming back to the problem studied in Corollary~\ref{lec14-modelemm}, Proposition~\ref{th:factorization} applies to~\eqref{eq:hat} with the spaces $\mathcal B_1=\mathrm L^2(d\gamma_k)$, $k\in (d,\infty)$, and $\mathcal B_2=\mathrm L^2(d\gamma_\infty)$ corresponding to the weights defined by~\eqref{Def:Measures}. The exponential growth of ~$\gamma_\infty$ guarantees that $\mathcal B_2$ is continuously imbedded in $\mathcal B_1$.
\begin{corollary}\phantomsection\label{th:fac-appl} Assume~\eqref{H1}--\eqref{H4}, $k\in(d,\infty]$, and $\xi\in\R^d$. Then there exists a constant $C>0$,
such that solutions $\hat f$ of~\eqref{eq:hat} with initial datum $\hat f_0(\xi,\cdot)\in\mathrm L^2(d\gamma_k)$ satisfy, 
with $\mu_\xi$ given by~\eqref{eq:mu_xi},
\[
\left\|\hat f(t,\xi,\cdot)\right\|_{\mathrm L^2(d\gamma_k)}^2\le C\,e^{-\,\mu_\xi\,t}\,\left\|\hat f_0(\xi,\cdot)\right\|_{\mathrm L^2(d\gamma_k)}^2\quad\forall\,t\ge 0\,.
\]
\end{corollary}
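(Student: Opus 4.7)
The plan is to invoke \refres{Proposition}{th:factorization} on equation~\eqref{eq:hat} with $\mathcal{B}_1 := \mathrm L^2(d\gamma_k)$ and $\mathcal{B}_2 := \mathrm L^2(d\gamma_\infty)$. The continuous embedding $\mathcal{B}_2\hookrightarrow\mathcal{B}_1$ is immediate because $\gamma_\infty = M^{-1}$ grows at least exponentially in~$|v|$ while $\gamma_k$ grows only polynomially. The generator of the semigroup associated with~\eqref{eq:hat} is $\sfL - i(v\cdot\xi)$; the task is to split it as $\mathfrak A + \mathfrak B$ with $\mathfrak A$ bounded from $\mathcal B_1$ to $\mathcal B_2$ and $\mathfrak B$ generating an exponentially contracting semigroup on $\mathcal B_1$ at a rate $\lambda_1$ that can be taken strictly larger than $\Lambda \ge \sup_\xi \mu_\xi$, uniformly in~$\xi$.

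In Case~(b), the natural choice is the gain/loss splitting $\sfL f = \mathsf K f - \nu(v)\,f$, with $\mathsf K f(v) := M(v)\int_{\R^d}\sigma(v,v')\,f(v')\,dv'$ and collision frequency $\nu(v) := \int_{\R^d}\sigma(v,v')\,M(v')\,dv' \in [1,\overline\sigma]$. Set $\mathfrak A := \mathsf K$ and $\mathfrak B := -\nu(v) - i(v\cdot\xi)$. Pointwise $|e^{\mathfrak B t}(v)| = e^{-\nu(v)\,t} \le e^{-t}$, so $e^{\mathfrak B t}$ contracts each of $\mathcal B_1$ and $\mathcal B_2$ at rate $\lambda_1 = 1$. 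The bound $\|\mathsf K f\|_{\mathcal B_2}^2 = \int M(v)\,|\int\sigma(v,v')\,f(v')\,dv'|^2\,dv \le \overline\sigma^{\,2}\,\|f\|_{\mathrm L^1(dv)}^2$, combined with Cauchy--Schwarz $\|f\|_{\mathrm L^1(dv)} \le (\int \gamma_k^{-1}\,dv)^{1/2}\,\|f\|_{\mathcal B_1}$, gives the boundedness of $\mathfrak A : \mathcal B_1 \to \mathcal B_2$ since $\int \gamma_k^{-1}\,dv < \infty$ for $k > d$. In Case~(a), I would use a velocity truncation: pick $\chi \in C_c^\infty(\R^d)$ with $0 \le \chi \le 1$ and $\chi \equiv 1$ on a large ball, set $\mathfrak A := K_0\,\chi(v)\,\mathrm{Id}$ for constants $K_0$ and truncation radius to be tuned, and put $\mathfrak B := \sfL - \mathfrak A - i(v\cdot\xi)$. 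Boundedness of $\mathfrak A : \mathcal B_1 \to \mathcal B_2$ is immediate from the compact support of $\chi$. The dissipativity of $\mathfrak B$ on $\mathcal B_1$, at a rate $\lambda_1$ made arbitrarily large by enlarging $K_0$ and the support of $\chi$, follows from the weighted-$\mathrm L^2$ computation of~\cite{MR3779780}: the skew-adjoint term $-i(v\cdot\xi)$ is harmless, integration by parts in $\int \gamma_k\,f\,\sfL f\,dv$ yields a non-positive dissipation plus a localized remainder (coming from $\nabla_v \gamma_k$) that is absorbed by the damping $-K_0\int \chi\,f^2\,\gamma_k\,dv$.

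Once these splittings are in place, \refres{Corollary}{lec14-modelemm} gives $\|e^{(\mathfrak A+\mathfrak B)\,t}\|_{\mathcal B_2 \to \mathcal B_2} \le \sqrt 3\,e^{-\mu_\xi t/2}$, so $\lambda_2 = \mu_\xi/2 \le \Lambda/2$. Choosing $\lambda_1 > \Lambda$ ensures $|\lambda_1 - \lambda_2| \ge \lambda_1 - \Lambda/2 > 0$ uniformly in $\xi$, and $\min\{\lambda_1,\lambda_2\} = \lambda_2 = \mu_\xi/2$. Applying \refres{Proposition}{th:factorization} then yields $\|e^{(\mathfrak A+\mathfrak B)\,t}\|_{\mathcal B_1 \to \mathcal B_1} \le C\,e^{-\mu_\xi t/2}$ with $C$ independent of $\xi$; squaring delivers the stated bound. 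The main technical obstacle is the weighted dissipativity estimate in Case~(a), which is the heart of the factorization/enlargement machinery of~\cite{MR3779780}; by contrast, Case~(b) is essentially direct because the loss part of the scattering operator is a bounded multiplier that already contracts every weighted $\mathrm L^2$ space.
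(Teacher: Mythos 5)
Your proposal is correct and follows essentially the same route as the paper: the same enlargement via \refres{Proposition}{th:factorization} with $\mathcal B_1=\mathrm L^2(d\gamma_k)$ and $\mathcal B_2=\mathrm L^2(d\gamma_\infty)$, the identical gain/loss splitting in Case (b), and the same truncated-multiplier splitting in Case (a), where the paper simply cites \cite[Lemma~3.8]{MisMou} for the weighted dissipativity estimate you sketch. Your rate bookkeeping ($\lambda_2=\mu_\xi/2$ from \refres{Corollary}{lec14-modelemm}, $\lambda_1$ bounded away from it uniformly in $\xi$) also matches the paper's argument.
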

\begin{proof} In Case~(a), let us define $\mathfrak A$ and $\mathfrak B$ by $\mathfrak AF=N\,\chi_RF$ and $\mathfrak BF=-\,i\,(v\cdot\xi)\,F+\mathsf LF-\mathfrak AF$, where $N$ and $R$ are two positive constants, $\chi$ is a smooth function such that $\mathbb 1_{B_1}\le\chi\le\mathbb{1}_{B_2}$, and $\chi_R:=\chi(\cdot/R)$. Here $B_r$ is the centered ball of radius $r$. It has been established in~\cite[Lemma~3.8]{MisMou} that if $k>d$, then the inequality 
\[
\int_{\R^d}(\sfL-\mathsf{\mathfrak A})(F)\,F\,d\gamma_k\le-\,\lambda_1\int_{\R^d}F^2\,d\gamma_k
\]
holds for some $\lambda_1>0$. Moreover, $\lambda_1$ can be chosen arbitrarily large for $R$ and $N$ large enough. The boundedness of $\mathfrak A:\,\mathcal B_1\to\mathcal B_2$ follows from the compactness of the support of~$\chi$ and Proposition~\ref{th:factorization} applies with $\lambda_2=\mu_\xi/2\le1/4$, where $\mu_\xi$ is given by~\eqref{eq:mu_xi}.

In Case~(b), we consider $\mathfrak A$ and $\mathfrak B$ such that
\begin{eqnarray*}
\mathfrak AF(v)&=&M(v)\int_{\R^d}\sigma(v,v')\,F(v')\,dv'\,,\\
\mathfrak BF(v)&=&-\left[i\,(v\cdot\xi)\,+\int_{\R^d}\sigma(v,v')\,M(v')\,dv'\right]\,F(v)\,.
\end{eqnarray*}
The boundedness of $\mathfrak A:\,\mathcal B_1\to\mathcal B_2$ follows from~\eqref{H2} and
\[\label{A-bound2}
\|\mathfrak AF\|_{\mathrm L^2(d\gamma_\infty)}\le\overline\sigma\,\|F\|_{\mathrm L^1(dv)}\le\overline\sigma\,\(\int_{\R^d}\gamma_k^{-1}\,dv\)^{1/2}\|F\|_{\mathrm L^2(d\gamma_k)}\,.
\]
Proposition~\ref{th:factorization} applies with $\lambda_2=\frac{\mu_\xi}2\le\frac14$ and $\lambda_1=1$ because \hbox{$\int_{\R^d}\sigma(v,v')\,M(v')\,dv'\ge1$}. \end{proof}

\section{Asymptotic behavior based on mode-by-mode estimates}\label{Sec:AsymptoticFourier}

In this section we consider~\eqref{eq:model} and use the estimates of Corollary~\ref{lec14-modelemm} with weight $\gamma_\infty=1/M$ and Corollary~\ref{th:fac-appl} for weights with $O(|v|^k)$ growth to get decay rates with respect to $t$. We shall consider two cases for the spatial variable $x$. In Section~\ref{sec:recov-spectr-gap}, we assume that $x\in\T^d$, where $\T^d$ is the flat $d$-dimensional torus (represented by $[0,2\pi)^d$ with periodic boundary conditions) and prove an exponential convergence rate. In Section~\ref{Sec:WholeSpaceFourier}, we assume that $x\in\R^d$ and establish algebraic decay rates.

\subsection{Exponential convergence to equilibrium in
  \texorpdfstring{$\T^d$}{Td}}
\label{sec:recov-spectr-gap}
In the periodic case $x\in\T^d$ there is a unique non-zero normalized
equilibrium given by
\[\label{f_infty}
  f_\infty(x,v)=\rho_\infty\,M(v)
  \quad \text{with} \quad
  \rho_\infty=\frac1{|\T^d|}\iint_{\T^d\times\R^d}f_0\,dx\,dv\,.
\]
\begin{corollary}\phantomsection\label{Thm:torus}
  Assume~\eqref{H1}--\eqref{H4} and $k\in(d,\infty]$. Then there
  exists a constant $C>0$, such that the solution $f$
  of~\eqref{eq:model} on $\T^d\times\R^d$ with initial datum
  $f_0\in\mathrm L^2(dx\,d\gamma_k)$ satisfies, with $\Lambda$ given
  by~\eqref{eq:mu_xi},
  \[
    \left\|f(t,\cdot,\cdot)-f_\infty\right\|_{\mathrm
      L^2(dx\,d\gamma_k)}\le C\,\left\|f_0-f_\infty\right\|_{\mathrm
      L^2(dx\,d\gamma_k)}\, e^{-\Lambda
      \frac{t}{4}}\quad\forall\,t\ge0\,.
  \]
\end{corollary}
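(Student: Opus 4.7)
The approach is to perform a Fourier series expansion in $x$ on $\T^d$, apply \refres{Corollary}{th:fac-appl} mode by mode, and handle the zero mode by a separate argument. The key arithmetic observation is that spatial frequencies are quantized to $\xi\in\Z^d$, so every nonzero mode satisfies $|\xi|^2\ge 1$, whence $\mu_\xi=\Lambda\,|\xi|^2/(1+|\xi|^2)\ge\Lambda/2$ uniformly. This is what produces a genuine spectral gap on the torus.

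Since $f_\infty$ is a stationary solution of~\eqref{eq:model}, the difference $g:=f-f_\infty$ solves the same equation with $\iint_{\T^d\times\R^d}g\,dx\,dv=0$, a property preserved by the flow. I expand $g(t,x,v)=\sum_{\xi\in\Z^d}\hat g(t,\xi,v)\,e^{i\,x\cdot\xi}$, so Parseval reduces the claim to a mode-by-mode estimate. For $\xi\in\Z^d\setminus\{0\}$, \refres{Corollary}{th:fac-appl} gives
\[
\|\hat g(t,\xi,\cdot)\|^2_{\mathrm L^2(d\gamma_k)}\le C\,e^{-\mu_\xi\,t}\,\|\hat g_0(\xi,\cdot)\|^2_{\mathrm L^2(d\gamma_k)}\le C\,e^{-\Lambda\,t/2}\,\|\hat g_0(\xi,\cdot)\|^2_{\mathrm L^2(d\gamma_k)}\,.
\]

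The zero mode $\hat g(t,0,\cdot)$ requires a dedicated treatment, since the mode-by-mode rate degenerates at $\xi=0$. The transport term vanishes there, so $\hat g(t,0,\cdot)$ solves the pure collision equation $\partial_t\hat g=\sfL\hat g$ with $\int_{\R^d}\hat g_0(0,v)\,dv=0$; this zero-mass condition is preserved, hence $\mathsf\Pi\hat g(t,0,\cdot)=0$ for all~$t$. Microscopic coercivity~\eqref{HH1} then yields
\[
\|\hat g(t,0,\cdot)\|^2_{\mathrm L^2(d\gamma_\infty)}\le e^{-2\,\lambda_m\,t}\,\|\hat g_0(0,\cdot)\|^2_{\mathrm L^2(d\gamma_\infty)}\,.
\]
To transfer this bound to the algebraic-weight space $\mathrm L^2(d\gamma_k)$ when $k<\infty$, I would apply \refres{Proposition}{th:factorization} to the pure-collision semigroup $e^{\sfL\,t}$ restricted to the mass-zero subspace, using the very same splitting $\sfL=\mathfrak A+\mathfrak B$ introduced in the proof of \refres{Corollary}{th:fac-appl}. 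A short Cauchy-Schwarz computation showing $\Theta^2\le K$, combined with the definition~\eqref{eq:mu_xi} of~$\Lambda$, yields $\Lambda\le 4\,\lambda_m$ automatically, so the zero-mode rate $2\,\lambda_m$ dominates the nonzero-mode rate $\Lambda/2$ and does not degrade the overall bound.

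Summing the squared mode contributions through Parseval produces $\|g(t)\|^2_{\mathrm L^2(dx\,d\gamma_k)}\le C\,e^{-\Lambda\,t/2}\|g_0\|^2_{\mathrm L^2(dx\,d\gamma_k)}$, and taking square roots delivers the stated rate $\Lambda/4$. The main obstacle is precisely the independent treatment of the zero mode in the weighted space: the $\mathrm L^2(d\gamma_\infty)$ estimate is immediate from microscopic coercivity, but transferring it to $\mathrm L^2(d\gamma_k)$ requires one further factorization argument and a careful check that the resulting zero-mode rate indeed dominates $\Lambda/2$, which is where the algebra $\Theta^2\le K$ enters.
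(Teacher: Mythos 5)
Your proposal is correct and follows essentially the same route as the paper: Fourier series on $\T^d$ with $\xi\in\Z^d$, \refres{Corollary}{th:fac-appl} for the nonzero modes (where $\mu_\xi\ge\Lambda/2$), microscopic coercivity for the zero mode, and Parseval. Your only elaboration is the explicit factorization argument transferring the zero-mode decay to $\mathrm L^2(d\gamma_k)$ for finite $k$ and the check $\Theta^2\le K$ ensuring this rate dominates $\Lambda/2$ -- a point the paper leaves implicit by simply invoking \refres{Corollary}{th:fac-appl} -- so no further changes are needed.
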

\begin{proof}
  We represent the flat torus $\T^d$ by $[0,2\pi)^d$ with periodic
  boundary conditions, and the Fourier variable is denoted
  $\xi\in\Z^d$. For $\xi=0$, the microscopic coercivity (see Section
  \ref{sec:mode-mode-hypoc}) implies
  \[
    \left\|\hat f(t,0,\cdot)-\hat f_\infty(0,\cdot)\right\|_{\mathrm
      L^2(d\gamma_\infty)}\le \left\|\hat f_0(0,\cdot)-\hat
      f_\infty(0,\cdot)\,\right\|_{\mathrm
      L^2(d\gamma_\infty)}\,e^{-t}\,.
  \]
  For all other modes, $\hat f_\infty(\xi,\cdot)=0$ for any $\xi\ne 0$
  (that is, for any $\xi$ such that $|\xi|\ge1$). We can use
  Corollary~\ref{lec14-modelemm} with $\mu_\xi\ge\Lambda/2$, with the
  notations of~\eqref{eq:mu_xi}. An application of Parseval's identity
  then proves the result for $k=\infty$, and $C=\sqrt3$.  If~$k$ is
  finite, the result with the weight $\gamma_k$ follows from
  Corollary~\ref{th:fac-appl}.
\end{proof}

Note that the latter result can also alternatively be proved by
directly applying Proposition~\ref{theo:DMS} to~\eqref{eq:model}, as
in~\cite{DMS-2part}.

\subsection{Algebraic decay rates in \texorpdfstring{$\R^d$}{Rd}}\label{Sec:WholeSpaceFourier}

With the result of Corollary~\ref{lec14-modelemm} and Corollary~\ref{th:fac-appl} we obtain a first proof of Theorem~\ref{th:whole-space-mode} as follows. Let $C>0$ be a generic constant which is going to change from line to line. Plancherel's formula implies
\[
\left\|f(t,\cdot,\cdot)\right\|_{\mathrm L^2(dx\,d\gamma_k)}^2\le C\int_{\R^d}\(\int_{\R^d}e^{-\,\mu_\xi\,t}\,\left|\hat f_0\right|^2\,d\xi\)\,d\gamma_k\,.
\]
We know that $\int_{|\xi|\le1}e^{-\,\mu_\xi\,t}\,d\xi\le\int_{\R^d}e^{-\frac\Lambda2\,|\xi|^2\,t}\,d\xi=\big(\frac{2\,\pi}{\Lambda\,t}\big)^{d/2}$ and thus, for all $v \in \R^d$,
\[
\int_{|\xi|\le1}\kern-5pt e^{-\,\mu_\xi\,t}\,\left|\hat f_0 \right|^2\,d\xi\le C\,\left\|f_0(\cdot,v)\right\|_{\mathrm L^1(dx)}^2\int_{\R^d}\kern-5pt e^{-\,\frac\Lambda2\,|\xi|^2\,t}\,d\xi\le C\,\left\|f_0(\cdot,v)\right\|_{\mathrm L^1(dx)}^2\,t^{-\frac d2}.
\]
Using the fact that $\mu_\xi\ge\Lambda/2$ when $|\xi|\ge1$ and Plancherel's formula, we know that, for all $v \in \R^d$,
\[
\int_{|\xi|>1}e^{-\,\mu_\xi\,t}\,\left|\hat f_0\right|^2\,d\xi\le C\,e^{-\frac\Lambda2\,t}\,\left\|f_0(\cdot,v)\right\|_{\mathrm L^2(dx)}^2\,,
\]
which completes a first proof of Theorem~\ref{th:whole-space-mode}.

\section{Hypocoercivity and the Nash inequality}\label{Sec:WholeSpaceNash}

In view of the proof of Theorem~\ref{th:whole-space-mode} in Section~\ref{Sec:WholeSpaceFourier} and of the rate, it is natural to wonder if the hypocoercivity can be controlled by the use of Nash's inequality. Here we temporarily abandon the Fourier variable~$\xi$ and consider the direct variable $x\in\R^d$: throughout this section, the \emph{transport operator} on the position space is defined as
\[
\mathsf T f=v\cdot\nabla_xf\,.
\]
We rely on the abstract setting of Section~\ref{sec:mode-mode-hypoc}, applied to~\eqref{eq:model} with the scalar product $\langle\cdot,\cdot\rangle$ on $\mathrm L^2(dx\,d\gamma_\infty)$ and the induced norm $\|\cdot\|$. Notice that this norm includes the $x$ variable, which was not the case in the mode-by-mode analysis of Section~\ref{sec:mode-mode-hypoc}. It is then easy to check that $(\mathsf T\mathsf\Pi)f=M\mathsf T\rho_f=v\cdot\nabla_x\rho_f\,M$, $(\mathsf T\mathsf\Pi)^*f=-\,\nabla_x\cdot\left(\int_{\R^d}v\,f\,dv\right)\,M$ and $(\mathsf T\mathsf\Pi)^*(\mathsf T\mathsf\Pi)f=-\,\Theta\,\Delta_x\rho_f\,M$ so that
\[
g=\mathsf{A}f=\big(1+(\mathsf T\mathsf\Pi)^*\mathsf T\mathsf\Pi\big)^{-1}(\mathsf T\mathsf\Pi)^*f\quad\Longleftrightarrow\quad g=u\,M
\]
where $u-\,\Theta\,\Delta u =-\,\nabla_x\cdot \left( \int_{\R^d}v\,f\,dv\right)$. Since $M$ is unbiased, $\mathsf{A}f=\mathsf{A(1-\Pi)}f$. For some $\delta>0$ to be chosen later, we redefine the entropy~by $\mathsf{H}[f]:=\tfrac12\,\|f\|^2+\delta\,\langle\mathsf Af,f\rangle$.

\begin{proof}[Proof of Theorem \ref{th:whole-space-mode}] If $f$ solves~\eqref{eq:model}, the time derivative of $\mathsf{H}[f(t,\cdot,\cdot)]$ is given by
\be{entr-dec}
\frac d{dt}\mathsf{H}[f]=-\,\mathsf{D}[f]
\ee
where, as in the proof of Proposition~\ref{theo:DMS}, 
\[
\mathsf{D}[f]:=-\,\langle\mathsf{L}f,f\rangle+\delta\,\langle\mathsf{AT}\mathsf\Pi f,f\rangle+\delta\,\re\langle\mathsf{AT}(1-\mathsf\Pi)f,f\rangle-\delta\,\re\langle\mathsf{TA}f,f\rangle-\,\delta\,\re\langle\mathsf{AL}f,f\rangle\,.
\]
Here we use the fact that $\langle\mathsf{A}f,\mathsf{L}f\rangle = 0$. The first term in $\mathsf{D}[f]$ satisfies the microscopic coercivity condition
\[
-\,\langle\mathsf{L}f,f\rangle\ge\lambda_m\,\|\mathsf{(1-\Pi)}f\|^2\,.
\]
The second term in~\eqref{entr-dec} is computed as follows. Solving $g=\mathsf{AT\Pi}f$ is equivalent to solving $\(1+(\mathsf{T\Pi})^*\mathsf{T\Pi}\)g=(\mathsf{T\Pi})^*\mathsf{T\Pi}f$, \emph{i.e.},
\be{Eqn:vf}
v_f-\,\Theta\,\Delta_x\,v_f=-\,\Theta\,\Delta_x\rho_f\,,
\ee
where $g=v_f\,M$. Hence
\[
\langle\mathsf{AT\Pi}f,f\rangle=\int_{\R^d}v_f\,\rho_f\,dx\,.
\]

A direct application of the hypocoercivity approach of~\cite{DMS-2part} to the whole space problem fails by lack of a \emph{macroscopic coercivity} condition. Although the second term in~\eqref{entr-dec} is not coercive, we observe that the last three terms in~\eqref{entr-dec} can still be dominated by the first two for $\delta>0$, small enough, as follows.
\\
1) As in~\cite{DMS-2part}, we use the adjoint operators to compute
\[
\langle\mathsf{AT}\mathsf{(1-\Pi)}f,f\rangle=-\,\langle\mathsf{(1-\Pi)}f,\mathsf{TA}^*f\rangle\,.
\]
We observe that
\[
\mathsf A^*f=\mathsf{T\Pi}\,\big(1+(\mathsf{T\Pi})^*\mathsf{T\Pi}\big)^{-1}f=\mathsf T\,\big(1+(\mathsf{T\Pi})^*\mathsf{T\Pi}\big)^{-1}\,\mathsf{\Pi}f=M\,\mathsf Tu_f=v\,M\cdot\nabla_xu_f
\]
where $u_f$ is the solution in $\mathrm H^1(dx)$ of
\be{Eqn:uf}
u_f-\,\Theta\,\Delta_xu_f=\rho_f\,.
\ee
With $K$ defined by~\eqref{Theta}, we obtain that
\[
\left\|\mathsf{TA}^*f \right\|^2\le K\,\left\|\nabla_x^2u_f\right\|_{\mathrm L^2(dx)}^2=K\,\left\|\Delta_xu_f\right\|_{\mathrm L^2(dx)}^2\,.
\]
On the other hand, we observe that $v_f=-\,\Theta\,\Delta u_f$
solves~\eqref{Eqn:vf}. Hence by multiplying~\eqref{Eqn:uf} by
$v_f=-\,\Theta\,\Delta u_f$ and integrating by parts, we know that
\be{ATPi} \Theta\,\left\|\nabla_xu_f\right\|_{\mathrm
  L^2(dx)}^2+\,\Theta^2\,\left\|\Delta_xu_f \right\|_{\mathrm
  L^2(dx)}^2
=\int_{\R^d}v_f\,\rho_f\,dx=\langle\mathsf{AT\Pi}f,f\rangle\,.  \ee
Notice that a central feature of our method is the fact that
quantities of interest involving the operator $\mathsf A$ can be
computed by solving an elliptic equation (for instance~\eqref{Eqn:vf}
in case of $\mathsf{AT\Pi}f$ or~\eqref{Eqn:uf} in case of
$\mathsf A^*f$). Altogether we obtain that
\[
|\langle\mathsf{AT}\mathsf{(1-\Pi)}f,f\rangle|\le\|\mathsf{(1-\Pi)}f\|\,\|\mathsf{TA}^*f\| 
\le \frac{\sqrt{K}}{\Theta}\,\|\mathsf{(1-\Pi)}f\|\,\langle\mathsf{AT\Pi}f,f\rangle^{1/2}\,.
\]
2) By~\eqref{A-bound}, we have
\[\label{TA}
\left|\langle\mathsf{TA}f,f\rangle\right|=\left|\langle\mathsf{TA}\mathsf{(1-\Pi)}f,\mathsf{(1-\Pi)}f\rangle\right|\le\|\mathsf{(1-\Pi)}f\|^2\,.
\]
3) It remains to estimate the last term on the right hand side of~\eqref{entr-dec}. Let us consider the solution $u_f$ of~\eqref{Eqn:uf}. If we multiply~\eqref{Eqn:vf} by $u_f$ and integrate, we observe that
\[
\Theta \left\|\nabla_x u_f\right\|_{\mathrm L^2(dx)}^2 =\int_{\R^d}u_f\,v_f\,dx\le\int_{\R^d}u_f\,v_f\,dx+\int_{\R^d}|v_f|^2\,dx=\int_{\R^d}v_f\,\rho_f\,dx
\]
because $v_f=-\,\Theta\,\Delta u_f$, so that
\[
\|\mathsf A^*f\|^2= \Theta \left\|\nabla_x u_f\right\|_{\mathrm L^2(dx)}^2 \le\langle\mathsf{AT\Pi}f,f\rangle\,.
\]
In Case~(a), we compute
\[
\langle\mathsf{AL}f,f\rangle=\langle\mathsf{L(1-\Pi)}f,\mathsf{A}^*f\rangle=\iint_{\R^d\times\R^d}\nabla_xu_f\cdot\frac{\nabla_vM}M\,\mathsf{(1-\Pi)}f\,dx\,dv\,.
\]
It follows from the Cauchy-Schwarz inequality that
\begin{multline*}
\int_{\R^d}|\nabla_vM|\,|\mathsf{(1-\Pi)}f|\,d\gamma_\infty 
\le\|\nabla_vM\|_{\mathrm L^2\left(d\gamma_\infty\right)}\|\mathsf{(1-\Pi)}f\|_{\mathrm L^2\left(d\gamma_\infty\right)}\\
= \sqrt{d\,\theta}\,\|\mathsf{(1-\Pi)}f\|_{\mathrm L^2\left(d\gamma_\infty\right)}
\end{multline*}
and
\[
|\langle\mathsf{AL}f,f\rangle|\le\left\|\nabla_xu_f\right\|_{\mathrm L^2(dx)}\left(\int_{\R^d}\left(\frac1d\int_{\R^d}|\nabla_vM|\,|\mathsf{(1-\Pi)}f|\,d\gamma\right)^2\,dx\right)^\frac12\,.
\]
Altogether, we obtain that
\[
|\langle\mathsf{AL}f,f\rangle|\le\sqrt{\frac\theta\Theta}\,\|\mathsf{(1-\Pi)}f\|\,\langle\mathsf{AT\Pi}f,f\rangle^\frac12\,.
\]
In Case (b), we use~\eqref{H2} to get that
\[
|\langle\mathsf{AL}f,f\rangle|\le\|\mathsf{L}f\|\,\|\mathsf{A}^*f\|\le2\,\overline\sigma\,\|\mathsf{(1-\Pi)}f\|\,\|\mathsf A^*f\|\le2\,\overline\sigma\,\|\mathsf{(1-\Pi)}f\|\,\langle\mathsf{AT\Pi}f,f\rangle^\frac12\,.
\]
In both cases, (a) and (b), the estimate can be written as
\[
|\langle\mathsf{AL}f,f\rangle|\le2\,\overline\sigma\,\|\mathsf{(1-\Pi)}f\|\,\langle\mathsf{AT\Pi}f,f\rangle^\frac12
\]
with the convention that $\overline\sigma=\frac12\,\sqrt{\theta/\Theta}$ in Case (a).

Summarizing, we know that
\[
-\frac d{dt}\mathsf{H}[f]\ge(\lambda_m-\delta)\,X^2+\delta\,Y^2+2\,\delta\,\mathsf b\,X\,Y
\]
with $X:=\|\mathsf{(1-\Pi)}f\|$, $Y:=\langle\mathsf{AT\Pi}f,f\rangle^{1/2}$ and $\mathsf b:=\tfrac K{2\,\Theta}+2\,\overline\sigma$. The largest $\mathsf a>0$ such that
\[
(\lambda_m-\delta)\,X^2+\delta\,Y^2+2\,\delta\,\mathsf b\,X\,Y\ge\mathsf a\(X^2+2\,Y^2\)
\]
holds for any $X$, $Y\in\R$ is given by the conditions
\be{Conditions}
\mathsf a<\lambda_m-\delta\,,\quad2\,\mathsf a<\delta\,,\quad\delta^2\,\mathsf b^2-(\lambda_m-\delta-\mathsf a)\,(\delta-2\,\mathsf a)\le0
\ee
and it is easy to check that there exists a positive solution if $\delta>0$ is small enough. To fulfill the additional constraint $\delta<1$, we can for instance choose
\[
\delta=\frac{4\,\min\left\{1,\lambda_m\right\}}{8\,\mathsf b^2+5}\quad\mbox{and}\quad \mathsf a=\frac\delta{4}\,.
\]
Altogether we obtain that
\[
-\frac d{dt}\mathsf{H}[f]\ge\mathsf a\,\Big(\|\mathsf{(1-\Pi)}f\|^2+2\,\langle\mathsf{AT\Pi}f,f\rangle\Big)\,.
\]
Using~\eqref{Eqn:uf} and~\eqref{ATPi}, we control~$\|\mathsf{\Pi}f\|^2=\|\rho_f\|_{\mathrm L^2(dx)}^2$ by $\langle\mathsf{AT\Pi}f,f\rangle$ according~to
\begin{multline*}
\left\|\mathsf{\Pi}f\right\|^2=\left\|u_f\right\|_{\mathrm L^2(dx)}^2+2\,\Theta\,\left\|\nabla_xu_f \right\|_{\mathrm L^2(dx)}^2+\Theta^2\,\left\|\Delta_xu_f\right\|_{\mathrm L^2(dx)}^2\\
\le\left\|u_f\right\|_{\mathrm L^2(dx)}^2+2\,\langle\mathsf{AT\Pi}f,f\rangle\,.
\end{multline*}
We observe that, for any $t\ge0$,
\[
\left\|u_f(t,\cdot)\right\|_{\mathrm L^1(dx)}=\left\|\rho_f(t,\cdot)\right\|_{\mathrm L^1(dx)}=\left\|f_0\right\|_{\mathrm L^1(dx\,dv)}\,,\quad \left\|\nabla_xu_f\right\|_{\mathrm L^2(dx)}^2\le\frac1\Theta\,\langle\mathsf{AT\Pi}f,f\rangle\,.
\]
According to~\cite{Nash}, we recall the \emph{Nash inequality}
\be{Nash}
\|u\|_{\mathrm L^2(dx)}^2\le \mathcal C_\text{Nash}\,\|u\|_{\mathrm L^1(dx)}^\frac4{d+2}\,\|\nabla u\|_{\mathrm L^2(dx)}^\frac{2\,d}{d+2}
\ee
for any function $u\in\mathrm L^1\cap\mathrm H^1(\R^d)$. We use~\eqref{Nash} with $u=u_f$ to get
\[
\|\mathsf{\Pi}f\|^2\le\Phi^{-1}\bigl(2\,\langle\mathsf{AT\Pi}f,f\rangle\bigr)\quad\mbox{with}\quad\Phi^{-1}(y) := y+\(\frac y{\mathsf c}\)^\frac d{d+2}\quad\forall\,y\ge0
\]
where $\mathsf c=2\,\Theta\,\mathcal C_\text{Nash}^{-1-\frac 2d}\,\|f_0\|_{\mathrm L^1(dx\,dv)}^{-\frac4d}$. The function $\Phi:\,[0,\infty) \to [0,\infty)$ satisfies $\Phi(0)=0$ and $0<\Phi'<1$, so that
\[
\|\mathsf{(1-\Pi)}f\|^2+2\,\langle\mathsf{AT\Pi}f,f\rangle\ge\Phi(\|f\|^2)\ge\Phi\big(\tfrac2{1+\delta}\,\mathsf{H}[f]\big)
\]
where the last inequality holds as a consequence of~\eqref{H-norm}. From
\[
z = \Phi^{-1}(y) = y+\(\frac y{\mathsf c}\)^\frac d{d+2} \le y_0^{\frac 2{d+2}} y^{\frac d{d+2}} + \(\frac y{\mathsf c}\)^\frac d{d+2} = \left(y_0^{\frac 2{d+2}} + \mathsf c^{-\frac d{d+2}} \right)y^{\frac d{d+2}}\,,
\]
as long as $y \le y_0$, for $y_0$ to be chosen later, we have
\[
y=\Phi(z) \ge \left(\Phi(z_0)^{\frac 2{d+2}} + \mathsf c^{-\frac d{d+2}} \right)^{-\frac{d+2}d}\,z^{1+\frac2d},
\]
as long as $z \le z_0 := \Phi^{-1}(y_0)$. Since $\frac d{dt}\mathsf{H}[f] \le 0$, we have $\tfrac2{1+\delta}\,\mathsf{H}[f] \le \tfrac2{1+\delta}\,\mathsf{H}[f_0]$. We thus apply the previous inequalities with $z_0 = \tfrac2{1+\delta}\,\mathsf{H}[f_0]$ together with the fact that $\Phi(z_0) \ge z_0 \ge \tfrac{1-\delta}{1+\delta}\,\| f_0\|^2$ and that $\mathsf c$ is proportional to $\|f_0\|_{\mathrm L^1(dx\,dv)}^{-4/d}$, to get
\[
\Phi\big(\tfrac2{1+\delta}\,\mathsf{H}[f]\big) \gtrsim \left(\| f_0\|_{\mathrm L^2\left(dx\,d\gamma_\infty \right)}^{\frac 4{d+2}} +\|f_0\|_{\mathrm L^1(dx\,dv)}^\frac{4}{d+2} \right)^{-\frac{d+2}d}\,\mathsf{H}[f] ^{1+\frac2d}\,.
\]
We deduce the entropy decay inequality
\be{eq:finalestimate}
-\frac d{dt}\mathsf{H}[f] \gtrsim \left(\| f_0\|_{\mathrm L^2\left(dx\,d\gamma_\infty \right)}^{\frac 4{d+2}} +\|f_0\|_{\mathrm L^1(dx\,dv)}^\frac{4}{d+2} \right)^{-\frac{d+2}d}\,\mathsf{H}[f]^{1+\frac2d} .
\ee
A simple integration from $0$ to $t$ shows that
\[
\mathsf{H}[f]\lesssim\Big[\mathsf{H}[f_0]^{-\frac2d}+\Big(\| f_0\|_{\mathrm L^2\left(dx\,d\gamma_\infty \right)}^{\frac 4{d+2}} +\|f_0\|_{\mathrm L^1(dx\,dv)}^\frac{4}{d+2} \Big)^{-\frac{d+2}d}\,t\,\Big]^{-\frac d2}\,.
\]
The result of Theorem~\ref{th:whole-space-mode} then follows from elementary considerations.\end{proof}

Using moments instead of the mass, it is possible to state an \emph{improved Nash inequality}: there exists a positive constant $\mathcal C_\star$ such that
\[
\|u\|_{\mathrm L^2(dx)}^2 \le\mathcal C_\star \left\| x\,u \right\|_{\mathrm L^1(dx)}^\frac4{d+4}\,\|\nabla u\|_{\mathrm L^2(dx)}^\frac{d+2}{d+4}
\]
for any $u\in\mathrm H^1(dx)\cap\mathrm L^1\((1+|x|)\,dx\)$ such that $\int_{\R^d}u\,dx=0$. The proof follows from a minor modification of Nash's original proof (attributed by Nash himself to Stein) in~\cite{Nash} and uses Fourier variables. As a consequence, any solution of the heat equation with zero average decays in $\mathrm L^2(dx)$ like $O\big(t^{-1-d/2}\big)$ as $t\to+\infty$. It is the topic of the following section to use Fourier variables in the spirit of Nash's proof to get improved rates of decay at the level of the kinetic equation.

\section{Algebraic decay rates in \texorpdfstring{$\R^d$}{Rd} by Fourier estimates and improvements}\label{Sec:Fourier}

We prove Theorem~\ref{th:whole-space-zero-average} in Section~\ref{sec:impr-decay-rate} and  Theorem~\ref{th:whole-space-zero-average2} in Section~\ref{Sec:ImprovedDecayRates}.

\subsection{Improved decay rates}\label{sec:impr-decay-rate}
Let us prove Theorem~\ref{th:whole-space-zero-average} by Fourier methods inspired by the proof of Nash's inequality.

\subsubsection*{$\bullet$ Step 1: Decay of the average in space by a factorization argument}
We define
\be{eq:fav}
\fav(t,v):=\int_{\R^d}f(t,x,v)\,dx
\ee
and observe that $\fav$ solves
\[
\partial_t\fav=\mathsf L\fav\,.
\]
As a consequence, we have that $0=\int_{\R^d}\fav(t,v)\,dv$. From the \emph{microscopic coercivity property}~\eqref{H4}, we deduce that
\[
\left\|\fav(t,\cdot)\right\|_{\mathrm L^2(d\gamma_\infty)}^2=\int_{\R^d}\left|\tfrac{\fav(t,v)}M\right|^2\,M\,dv\le\left\|\fav(0,\cdot)\right\|_{\mathrm L^2(d\gamma_\infty)}^2\,e^{-\lambda_m\,t}\quad\forall\,t\ge0\,.
\]
With $k\in(d,\infty)$, Proposition~\ref{th:factorization} applies like in the proof of Corollary~\ref{th:fac-appl} or in~\cite{MisMou}. We observe that $\left\|\fav(0,\cdot)\right\|_{\mathrm L^2\(|v|^2\,d\gamma_k\)}\le\left\|f_0\right\|_{\mathrm L^2\(|v|^2\,d\gamma_k;\,\mathrm L^1(dx)\)}$. For some positive constants~$C$ and~$\lambda$, we get that
\be{eq:decayF}
\left\|\fav(t,\cdot)\right\|_{\mathrm L^2\(|v|^2\,d\gamma_k\)}^2\le C\,\left\|f_0\right\|_{\mathrm L^2\(|v|^2\,d\gamma_k;\,\mathrm L^1(dx)\)}^2\,e^{-\lambda\,t}\,, \quad\forall\,t\ge0\,.
\ee

\subsubsection*{$\bullet$ Step 2: Improved decay of $f$}
Let us define $g(t,x,v):=f(t,x,v)-\fav(t,v)\,\varphi(x)$, where $\varphi$ is a given positive function satisfying
\[
\int_{\R^d} \varphi(x)dx = 1 \,,\qquad\mbox{e.g.}\quad \varphi(x):=(2\pi)^{-d/2}\,e^{-|x|^2/2}\,,\quad\forall\,x\in\R^d\,.
\]
Since $\partial_t \fav=\mathsf{L}\fav$, the Fourier transform $\hat g(t,\xi,v)$ of $g(t,x,v)$ solves
\[
\partial_t\hat g+\mathsf T\hat g=\mathsf L\hat g-\fav\,\mathsf T\hat\varphi \,,
\]
where $\mathsf T\hat\varphi=i\,(v\cdot\xi)\,\hat\varphi$. Using Duhamel's formula
\[
\hat g=e^{(\mathsf L-\mathsf T)\,t}\hat g_0-\int_0^te^{(\mathsf L-\mathsf T)\,(t-s)}\,\fav(s,v)\,\mathsf T\hat\varphi(\xi)\,ds\,,
\]
Corollary~\ref{lec14-modelemm}, and Proposition~\ref{th:factorization}, for some generic constant $C>0$ which will change from line to line, we get
\begin{multline}\label{Duhamel}\tag{21} 
\left\|\hat g(t,\xi,\cdot)\right\|_{\mathrm L^2\(d\gamma_k\)}\le C\,e^{-\frac12\,\mu_\xi\,t}\,\left\|\hat g_0(\xi,\cdot)\right\|_{\mathrm L^2\(d\gamma_k\)}\\+C\int_0^te^{-\frac{\mu_\xi}2\,(t-s)}\left\|\fav(s,\cdot)\right\|_{\mathrm L^2\(|v|^2\,d\gamma_k\)}|\xi|\,|\hat\varphi(\xi)|\,ds\,.
\end{multline}
The key observation is $\hat g_0 (0,v)=0$, so that $\hat g_0(\xi,v)=\int_0^{|\xi|}\frac\xi{|\xi|}\cdot\nabla_\xi\hat g_0\big(\eta\,\frac\xi{|\xi|},v\big)\,d\eta$ yields
\[
|\hat g_0(\xi,v)|\le|\xi|\,\left\|\nabla_\xi\hat g_0(\cdot,v)\right\|_{\mathrm L^\infty(d\xi)}\le|\xi|\,\left\|g_0(\cdot,v)\right\|_{\mathrm L^1(|x|\,dx)}\quad\forall\,(\xi,v)\in\R^d\times \R^d\,.
\]
We know from~\eqref{eq:mu_xi} that $\mu_\xi=\Lambda\,|\xi|^2/(1+|\xi|^2)$. The first term of the r.h.s.~of~\eqref{Duhamel} can therefore be estimated for any $t\ge1$ by
\begin{multline*}
\(\int_{|\xi|\le1}\int_{\R^d}\left|e^{(\mathsf L-\mathsf T)\,t}\hat g_0\right|^2d\gamma_k\,d\xi\)^{1/2}\le\(\int_{\R^d}|\xi|^2\,e^{-\frac\Lambda2\,|\xi|^2\,t}\,d\xi\)^{1/2}\left\|g_0\right\|_{\mathrm L^2\(d\gamma_k;\,\mathrm L^1(|x|\,dx)\)}\\
\le\frac C{(1+t)^{1+\frac d2}}\,\left\|g_0\right\|_{\mathrm L^2\(d\gamma_k;\,\mathrm L^1(|x|\,dx)\)}\,,
\end{multline*}
which is the leading order term as $t\to\infty$, and we have that
\[
\int_{|\xi|>1}e^{-\mu_\xi\,t}\,\left\|\hat g_0(\xi,\cdot)\right\|_{\mathrm L^2\(d\gamma_k\)}^2\,d\xi\le C\,e^{-\frac{\Lambda}2t}\,\left\|g_0\right\|_{\mathrm L^2\(dx\,d\gamma_k\)}^2
\]
for any $t\ge0$, using the fact that $\mu_\xi\ge\Lambda/2$ when $|\xi|\ge1$ and Plancherel's formula.

Using~\eqref{eq:decayF}, the second term of the r.h.s.~of~\eqref{Duhamel} is estimated by
\begin{multline*}
\int_{\R^d}\(\int_0^te^{-\frac{\mu_\xi}2\,(t-s)}\left\|\fav(s,\cdot)\right\|_{\mathrm L^2\(|v|^2\,d\gamma_k\)}|\xi|\,|\hat\varphi(\xi)|\,ds\)^2d\xi\\
\le
C\,\left\|f_0\right\|_{\mathrm L^2\(|v|^2\,d\gamma_k;\,\mathrm L^1(dx)\)}^2\int_{\R^d}|\xi|^2\,|\hat\varphi(\xi)|^2\(\int_0^te^{-\frac{\mu_\xi}2\,(t-s)}\,e^{-\frac\lambda2s}\,ds\)^2\,d\xi\,.
\end{multline*}
On the one hand, we use the Cauchy-Schwarz inequality to get
\begin{multline*}
\int_{|\xi|\le1}|\xi|^2\,|\hat\varphi(\xi)|^2\(\int_0^te^{-\frac{\mu_\xi}2\,(t-s)}e^{-\frac\lambda2s}\,ds\)^2\,d\xi\\
\le\left\|\varphi\right\|_{\mathrm L^1(dx)}^2\int_{|\xi|\le1}|\xi|^2\(\int_0^te^{-\,\mu_\xi\,(t-s)}\,e^{-\frac\lambda2 s}\,ds\)\(\int_0^{t}e^{-\frac\lambda2 s}\,ds\)d\xi\hspace*{1.5cm}\\
\le\frac2\lambda\,\left\|\varphi\right\|_{\mathrm L^1(dx)}^2\int_0^t\(\int_{|\xi|\le1}|\xi|^2e^{-\frac{\Lambda}2\,|\xi|^2\,(t-s)}\,d\xi\)e^{-\frac\lambda2\,s}\,ds\le C_1\,t^{-\frac d2-1}+C_2\,e^{-\frac\lambda4\,t}\,,
\end{multline*}
where the last inequality is obtained by splitting the integral in $s$ on $(0,t/2)$ and $(t/2,t)$. On the other hand, using $\mu_\xi\ge\Lambda/2$ when $|\xi|\ge1$, we obtain
\[
\int_{|\xi|\ge1}|\xi|^2\,|\hat\varphi(\xi)|^2\(\int_0^te^{-\frac{\mu_\xi}2\,(t-s)}\,e^{-\frac\lambda2s}\,ds\)^2\,d\xi\le t^2\,e^{-\min\lbrace\Lambda/2,\lambda\rbrace\,t}\,\left\|\nabla\varphi\right\|_{\mathrm L^2(dx)}^2\, .
\]

By collecting all terms, we deduce that $\left\|g(t,\cdot,\cdot)\right\|_{\mathrm L^2\(dx\,d\gamma_k\)}^2$ is bounded by
\[
C\(\left\|g_0\right\|_{\mathrm L^2\(d\gamma_k;\,\mathrm L^1(|x|\,dx)\)}^2+\left\|f_0\right\|_{\mathrm L^2\((|v|^2\,d\gamma_k\,;\mathrm L^1(dx)\)}^2\)(1+t)^{-\left(1+\frac d2\right)}\,,
\]
for some constant $C>0$. Recalling that $f=g+\fav\,\varphi$, the proof of Theorem~\ref{th:whole-space-zero-average} is completed using~\eqref{eq:decayF}.

\subsection{Improved decay rates with higher order cancellations}\label{Sec:ImprovedDecayRates}

We prove Theorem~\ref{th:whole-space-zero-average2}, which means that from now on we assume in Case (a) that $M$ is a normalized Gaussian \eqref{Gaussian}, and in Case (b) that $\sigma\equiv 1$. Moreover, the initial data satisfies~\eqref{ZeroAverage}, that is,
\[
\iint_{\R^d\times\R^d} f_0\, P \, dx\,dv = 0\quad\forall\,P \in \R_\ell[X,V]\,.
\]
For any $P \in \mathbb \R_\ell[X]$, let
\[
P[f](t,v):=\int_{\R^d}P(x)\,f(t,x,v)\,dx\,,
\]
so that $\int_{\R^d}P[f](0,v)\,dv=0$. 

In this section we use the notation $\lesssim_k$ to express inequalities up to a constant which depends on~$k$.

\subsubsection*{$\bullet$ Step 1: Conservation of zero moments} For a solution $f$ of~\eqref{eq:model} we compute

\begin{multline*}
\frac{d}{dt} \iint_{\R^d\times\R^d} f(t,x,v)\, P(x,v)\, dx\,dv \\= - \iint_{\R^d\times\R^d} (v \cdot \nabla_x f)\, P \, dx\,dv + \iint_{\R^d\times\R^d} (\mathsf{L}f)\, P \, dx\,dv\\
= \iint_{\R^d\times\R^d} \left( v \cdot \nabla_x P \right) f \, dx\,dv + \iint_{\R^d\times\R^d} (\mathsf{L}f)\, P \, dx\,dv\,.
\end{multline*}
In Case (a) of a Fokker-Planck operator, we may write 
\begin{multline*}
\iint_{\R^d\times\R^d} (\mathsf{L}f)\, P \, dx\,dv = \iint_{\R^d\times\R^d} \frac{1}{M}\,\nabla_v \cdot \left( M\,\nabla_v P \right) f \, dx\,dv \\= \iint_{\R^d\times\R^d} \left( \Delta_v P - v \cdot \nabla_v P\right) f \, dx\,dv\,.
\end{multline*}
By definition of $\R_\ell[X,V]$, it turns out that $\Delta_v P - v \cdot \nabla_v P \in \R_\ell[X,V]$. For the scattering operator of Case (b), one has 
\begin{align*}
&\iint_{\R^d\times\R^d} (\mathsf{L}f)\, P \, dx\,dv\\ 
&= \iint_{\R^d\times\R^d} \left( \int_{\R^d} \left( M(v)\,f(t,x,v') - M(v')\,f(t,x,v)\right)\,dv' \right)\, P(x,v)\, dx\,dv \\
&= \iiint_{\R^d \times \R^d \times \R^d} \left( M(v)\,f(t,x,v') - M(v')\,f(t,x,v)\right) \, P(x,v)\,dx\,dv\,dv'\\
&= \iint_{\R^d\times\R^d} \left( \int_{\R^d} M(v)\, P(x,v)\,dv \right) f(t,x,v')\,dx\,dv' - \iint_{\R^d\times\R^d} f(x,v)P(x,v)\,dx\,dv\,.
\end{align*}
One can check that $\int_{\R^d} M(v)\, P(x,v)\,dv \in \R_\ell[X]$. Since also $v\cdot\nabla_x P\in \R_\ell[X,V]$, the evolution of moments of order lower or equal than $\ell$ is equivalent to a linear ODE of the form $\dot Y(t) = Q\,Y(t)$, where $Q$ is a matrix resulting from the previous computations. Consequently, if $Y(0)=0$ initially, it remains null for all times. 

\subsubsection*{$\bullet$ Step 2: Decay of polynomial averages in space.} We claim that for any $j \le \ell$, there exists $\lambda > 0$ such that, for any $P \in \R_j[X]$ and $q \in \N$,
\be{eq:estP}
\left\|P[f](t,\cdot)\right\|_{\mathrm L^2\(d\gamma_{k+q}\)}
\lesssim_{j,q}\|f_0\|_{\mathrm L^2\(d\gamma_{k+q+2j};\,\mathrm L^1\((1+|x|^j) \, dx\)\)}
\,(1+t)^j \, e^{-\lambda\,t}\quad\forall\,t\ge0\,.
\ee
Let us prove it by induction.

\smallskip\noindent\emph{1. The case $j=0$.} Notice that $j=0$ means that $P$ is a real number and $P[f]=\fav$ as defined in 
\eqref{eq:fav}, up to a multiplication by a constant. Since $\int_{\R^d} \fav(t,v) \, dv=0$ for any $t\ge0$, one has 
$\partial_t \fav = \mathsf{L}\fav$, thus we deduce from the \emph{microscopic coercivity property} as above that
\[
\|\fav(t,\cdot)\|_{\mathrm L^2(d\gamma_\infty)} \le\|\fav(0,\cdot)\|_{\mathrm L^2(d\gamma_\infty)}\,e^{-\lambda_m\,t}\quad\forall\,t\ge0\,.
\]
We also obtain that
\be{Ineq:j=0}
\|\fav(t,\cdot)\|_{\mathrm L^2\(d\gamma_{k+q}\)}\lesssim_q\|f_0\|_{\mathrm L^2\( d\gamma_{k+q};\,\mathrm L^1(dx)\)}\,e^{-\lambda\,t} \quad\forall\,t\ge0\,,
\ee
but this requires some comments. The case $k\in(d,\infty)$ is covered by Corollary~\ref{th:fac-appl}. 

The case $k=\infty$ in~\eqref{Ineq:j=0} is given by the following lemma.
\begin{lemma} Under the assumptions of Theorem~\ref{th:whole-space-zero-average2}, one has
\begin{equation*}
\|\fav(t,\cdot)\|_{\mathrm L^2\((1+|v|^q)\,d\gamma_\infty\)}\lesssim_q\|f_0\|_{\mathrm L^2\((1+|v|^q)\,d\gamma_\infty;\,\mathrm L^1(dx)\)}\,e^{-\lambda\,t} \quad\forall\,t\ge0\,.
\end{equation*}
\end{lemma}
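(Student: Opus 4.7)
The strategy splits according to the form of the collision operator, with the common preliminary remark that, by~\eqref{ZeroAverage} applied with $P\equiv 1$ and by mass conservation, $\int_{\R^d}\fav(t,v)\,dv\equiv 0$; equivalently, $\fav(t,\cdot)$ lies in the invariant zero-mean subspace $V:=\{f:\rho_f=0\}$ on which the microscopic coercivity property~\eqref{HH1} provides exponential decay in $\mathrm L^2(d\gamma_\infty)$ with rate $\lambda_m=1$ (the Poincar\'e constant of the normalized Gaussian).

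\textbf{Case (b).} With $\sigma\equiv 1$ the operator reduces to $\mathsf L f=M\rho_f-f$, so $\partial_t\fav=-\fav$ and $\fav(t,v)=e^{-t}\fav(0,v)$. The announced bound then follows immediately from Minkowski's inequality $\|\fav(0,\cdot)\|_{\mathrm L^2((1+|v|^q)d\gamma_\infty)}\le\|f_0\|_{\mathrm L^2((1+|v|^q)d\gamma_\infty;\,\mathrm L^1(dx))}$.

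\textbf{Case (a).} The plan is to promote the $\mathrm L^2(d\gamma_\infty)$-decay given by~\eqref{HH1} to the smaller space $\mathcal B_2:=\mathrm L^2((1+|v|^q)d\gamma_\infty)$ by invoking the shrinking \refres{Proposition}{prop:shrink}, with $\mathcal B_1:=\mathrm L^2(d\gamma_\infty)$ and the splitting $\mathsf L=\mathfrak A+\mathfrak B$, $\mathfrak A f=N\chi_R f$, $\mathfrak B=\mathsf L-\mathfrak A$, using the same cut-off $\chi_R=\chi(\cdot/R)$ as in the proof of \refres{Corollary}{th:fac-appl}. The bound $\|\mathfrak A\|_{1\to 2}\le N(1+(2R)^q)^{1/2}$ is immediate from the compact support of $\chi_R$. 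The crux is the dissipativity of $\mathfrak B$ in $\mathcal B_2$: writing $f=uM$ and integrating by parts yields
\[
\int_{\R^d}(\mathsf L f)f(1+|v|^q)\,d\gamma_\infty=-\int_{\R^d}M(1+|v|^q)|\nabla_v u|^2\,dv-\frac q2\int_{\R^d}u^2M\bigl[|v|^q-(q-2+d)|v|^{q-2}\bigr]dv.
\]
For $|v|\ge R_0:=\sqrt{2(q-2+d)}$ the bracket is bounded below by $\tfrac12|v|^q$ and supplies dissipation at infinity, whereas on the compact set $\{|v|\le R_0\}$ the bounded positive remainder is absorbed by the term $-N\!\int\!\chi_R f^2(1+|v|^q)\,d\gamma_\infty$ as soon as $R\ge R_0$ and $N$ is taken large (depending on $q$, $d$, $R$). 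This gives $\|e^{\mathfrak B t}\|_{2\to 2}\le e^{-\lambda t}$ for some $\lambda>0$, and \refres{Proposition}{prop:shrink} concludes, the $\mathcal B_2$-norm of $\fav(0,\cdot)$ being controlled by $\|f_0\|_{\mathrm L^2((1+|v|^q)d\gamma_\infty;\,\mathrm L^1(dx))}$ via Minkowski.

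\textbf{Main obstacle.} Formally, \refres{Proposition}{prop:shrink} requires $e^{\mathsf L t}$ to contract on the whole of $\mathcal B_1$, which fails because $M\in\ker\mathsf L$; however, its Duhamel-based proof applies verbatim along the particular trajectory $t\mapsto e^{\mathsf L t}\fav(0)$, which remains in the invariant subspace $V$ where contraction does hold, so only the $\mathcal B_1$-bound along this orbit is needed. The genuine technical point is the weighted dissipativity estimate for $\mathfrak B$ in $\mathcal B_2$, which handles the non-trivial commutator between the Fokker--Planck operator and the polynomial weight $(1+|v|^q)$ and is the natural analogue, for the exponential-times-polynomial weight $(1+|v|^q)\,d\gamma_\infty$, of Lemma~3.8 of~\cite{MisMou}.
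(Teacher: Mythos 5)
Your proof is correct and follows essentially the same route as the paper: in Case (a) you invoke \refres{Proposition}{prop:shrink} with the same splitting $\mathfrak A=N\chi_R$, $\mathfrak B=\mathsf L-\mathfrak A$ and the same weighted dissipativity computation for $e^{\mathfrak B t}$ on $\mathrm L^2\((1+|v|^q)\,d\gamma_\infty\)$, the $\mathcal B_1$-decay being used only along the zero-mean orbit of $\fav$ (a point the paper leaves implicit but which your remark makes rigorous). Your Case (b) is a minor simplification: since $\rho_{\fav}\equiv0$ you get $\partial_t\fav=-\,\fav$ and the explicit solution $\fav(t)=e^{-t}\fav(0)$, whereas the paper runs the same shrinking argument with $\mathfrak AF=M\int_{\R^d}F\,dv'$ and $\mathfrak BF=-F$; both yield the stated bound.
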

\begin{proof}
We rely on Proposition~\ref{prop:shrink} with the Banach spaces $\mathcal B_1 = \sfL^2(d\gamma_\infty)$ and $\mathcal B_2 = \sfL^2\big((1+|v|^q)\,d\gamma_\infty\big)$. In Case~(a), let us define $\mathfrak A$ and $\mathfrak B$ by $\mathfrak AF=N\,\chi_RF$ and $\mathfrak BF=\mathsf LF-\mathfrak AF$.
In Case~(b), we consider $\mathfrak A$ and $\mathfrak B$ such that
\begin{eqnarray*}
\mathfrak AF(v)&=&M(v)\int_{\R^d} F(v')\,dv'\,,\\
\mathfrak BF(v)&=&-\int_{\R^d} M(v')\,dv'\,F(v)\,.
\end{eqnarray*}
The semi-group generated by $\mathfrak A +\mathfrak B$ is exponentially decreasing in $\mathcal B_1$ by the microscopic coercivity property, as above.
The semi-group generated by $\mathfrak B$ is exponentially decreasing in $\mathcal B_2$. In Case (b), it is straightforward. In Case (a), $F(t)=e^{\mathfrak B t}\,F_0$ is such that
\begin{align*}
\frac12\,&\frac d{dt}\int_{\R^d}|F|^2\(1+|v|^q\)d\gamma_\infty=\int_{\R^d}(\mathfrak BF)\,F\(1+|v|^q\)d\gamma_\infty\\
& =\int_{\R^d}\nabla_v \left(M\,\nabla_v\left( \tfrac{F}{M}\right)\right)\,F\(1+|v|^q\)\,d\gamma_\infty-\int_{\R^d} N \chi_R(v)\,| F |^2 \(1+|v|^q\)d\gamma_\infty \\
& = - \int_{\R^d} \left| \nabla_v\left( \tfrac{F}{M}\right) \right|^2\, \(1+|v|^q\)M\,dv - \int_{\R^d} q\, | v |^{q-2}\,v \cdot \nabla_v\left( \tfrac{F}{M}\right) \tfrac{F}{M}\,M\,dv \\ & \hspace*{6cm} - \int_{\R^d} N \chi_R(v)\,| F |^2 \(1+|v|^q\)\,\frac{dv}{M} \\
&\le \int_{\R^d} \left\lbrace \tfrac q{2}\,\tfrac{ \nabla_v \cdot \left( | v |^{q-2}\,v\, M \right) }{\(1+|v|^q\)M} - N \chi_R(v) \right\rbrace | F | ^2 \(1+|v|^q\)\,\frac{dv}{M}\le-\frac \lambda2\int_{\R^d}|F|^2\(1+|v|^q\)d\gamma_\infty
\end{align*}
for some $\lambda>0$, by choosing $N$ and $R$ large enough.

The operator $\mathfrak{A}:\mathcal{B}_1\to \mathcal{B}_2$ is bounded. This is straightforward in Case (a) and follows from the boundedness of $\int_{\R^d}M(v)\,\(1+|v|^q\)d\gamma_\infty$ in Case~(b). Proposition~\ref{prop:shrink} applies which concludes the proof.
\end{proof}

\smallskip\noindent\emph{2. Induction.} Let us assume that~\eqref{eq:estP} is true for some $j \ge 0$, consider $P \in
\mathbb \R_{j+1}[X]$ and observe that $P[f]$ solves 
\begin{equation*}
\partial_t P[f] = \mathsf L P[f] - \int_{\R^d} \left( v \cdot\nabla_x P\right) f \,dx\,. 
\end{equation*}
Since $\nabla_x P \in \R_j[X]$, the
induction hypothesis at step $j$ (applied with $q$ replaced by $q+2$) gives
\begin{align*}
\textstyle\hspace*{-12pt}\left\| v \cdot \int_{\R^d}\left( \nabla_x P \right)[f]\,dx \right\|_{\mathrm L^2\(d\gamma_{k+q}\)} &\textstyle\lesssim \left\| \int_{\R^d}\left( \nabla_x P \right)[f]\,dx \right\|_{\mathrm L^2\(\,d\gamma_{k+q+2}\)} \\ & \lesssim_{j,q}\|f_0\|_{\mathrm L^2\(d\gamma_{k+q+2(j+1)};\,\mathrm L^1\((1+|x|^j) \,
dx\)\)}\,(1+t)^j \, e^{-\lambda\,t}\,.
\end{align*}
By Duhamel's formula, we have 
\begin{equation*}
P[f](t,v) = e^{\mathsf Lt} P[f](0,v) - \int_0^t e^{\mathsf L(t-s)} \left( v \cdot \int_{\R^d} \left(\nabla_x P\right)[f_s] \,dx \right) \, ds\,.
\end{equation*}
Note that $\int_{\R^d} v \cdot \int_{\R^d} \left(\nabla_x P\right)[f] \,dx\,dv = \iint_{\R^d\times\R^d} \left(v \cdot \nabla_x P\right)[f] \,dx\,dv= 0$ for all $t\ge0$ since $v \cdot \nabla_x P \in \R_\ell[X,V]$. As a consequence, the decay of the semi-group associated with $\mathsf L$ can be estimated by 
\begin{multline*}
\left\| e^{\mathsf L(t-s)} \left( v \cdot \int_{\R^d} \left(\nabla_x P\right)[f_s] \,dx \right) \right\|_{\mathrm L^2(d\gamma_\infty)} \le \left\|v \cdot \int_{\R^d} \left(\nabla_x P\right)[f_s] \,dx \right\|_{\mathrm L^2(d\gamma_\infty)}\,e^{-\lambda_m\,(t-s)}\,.
\end{multline*}
As in the case $j=0$, we deduce from Corollary~\ref{th:fac-appl} that
\begin{multline*}
\left\| e^{\mathsf L(t-s)} \left( v \cdot \int_{\R^d} \left(\nabla_x P\right)[f_s] \,dx \right) \right\|_{\mathrm L^2((1+|v|^q)\,d\gamma_k)}\\
\le \left\|v \cdot \int_{\R^d} \left(\nabla_x P\right)[f_s] \,dx \right\|_{\mathrm L^2(d\gamma_{k+q})}\,e^{-\lambda\,(t-s)}\\
\lesssim_{q,k}\|f_0\|_{\mathrm L^2\(d\gamma_{k+q+2(j+1)};\,\mathrm L^1\((1+|x|^j) \,
dx\)\)}\,(1+s)^j \, e^{-\lambda\,t}\,.
\end{multline*}
Moreover, since $\iint_{\R^d\times\R^d} f_0(x,v)\, P(x) \, dx\,dv =0$, for the same reasons we also have that
\begin{equation*}
\Big\| e^{\mathsf Lt} P[f](0,\cdot) \Big\|_{\mathrm L^2\(d\gamma_{k+q}\)} \le \left\|P[f_0] \right\|_{\mathrm L^2\((1+|v|^q)\,d\gamma_k\)}\,e^{-\lambda\,t}
\end{equation*}
for some $\lambda>0$. We deduce from Duhamel's formula that
\begin{align*}
&\| P[f]\|_{\mathrm L^2\(d\gamma_{k+q}\)}\\
&\lesssim\left\| e^{\mathsf Lt} P[f](0,\cdot)\right\|_{\mathrm L^2\(d\gamma_{k+q}\)}+ \int_0 ^t{\textstyle \left\| e^{-\mathsf L \, (t-s)} \left( v
\cdot \int_{\R^d}\nabla_x P[f_s]\,dx \right) \right\|_{\mathrm L^2\(d\gamma_{k+q}\)}}\, ds \\ &\lesssim_k\|f_0\|_{\mathrm L^2\(d\gamma_{k+q};\,\mathrm L^1\((1+|x|^{j+1}) \,
dx\)\)} \,e^{-\lambda\,t}\\
&\hspace*{4cm}+ \int_0 ^t (1+s)^j \, e^{-\lambda
\,t} \,\|
f_0\|_{\mathrm L^2\(d\gamma_{k+q+2(j+1)};\,\mathrm L^1\((1+|x|^{j}) \, dx\)\)}\, ds \\ 
&\lesssim_k\|f_0\|_{\mathrm L^2\(d\gamma_{k+q+2(j+1)};\,\mathrm L^1\((1+|x|^{j+1}) \,
dx\)\)} \,(1+t)^{j+1} e^{-\lambda\,t}\,,
\end{align*}
which proves the induction.

\subsubsection*{$\bullet$ Step 3: Improved decay of $f$.} Let us choose some $t_0>0$. In order to estimate $\big\|f(t,\cdot,\cdot) \big\|_{\mathrm L^2(dx \, d\gamma_k)} ^2=\big\| e^{(\mathsf L-\mathsf T)t} f_0 \big\|_{\mathrm L^2(dx \, d\gamma_k)} ^2$, we compute its evolution on $(0,2\,t_0)$ and split the interval on $(0,t_0)$ and $(t_0,2\,t_0)$ using the semi-group property
\[
\left\| e^{(\mathsf L-\mathsf T)\,(2\,t_0)} f_0 \right\|_{\mathrm L^2(dx \, d\gamma_k)} ^2 = \left\| e^{(\mathsf L-\mathsf T)\,t_0} \left( e^{(\mathsf L-\mathsf T)\,t_0} f_0 \right) \right\|_{\mathrm L^2(dx \, d\gamma_k)} ^2\,.
\]
Up to the end of this section, $\mathsf T=v\cdot\nabla_x$ denotes the transport operator in position and velocity variables. We decompose $f_{t_0} = e^{(\mathsf L-\mathsf T)\,t_0} f_0$ into
\[
f_{t_0} = \(\sum_{| \alpha | \le \ell} \frac{1}{\alpha !}\,X^\alpha[f_{t_0}] \, \partial^{\alpha} \varphi\) + g_0
\quad\mbox{with}\quad g_0 := f_{t_0} - \sum_{| \alpha | \le \ell} \frac{1}{\alpha !}\,X^\alpha[f_{t_0}] \, \partial^{\alpha} \varphi
\]
where $\alpha=(\alpha_1,\alpha_2,\ldots\alpha_i\ldots\alpha_d)\in\N^d$ is a multi-index such that $|\alpha|=\sum_{i=1}^d\alpha_i\le\ell$ and $\varphi$ is given by
\[
\varphi(x):=(2\pi)^{-d/2}\,e^{-|x|^2/2}\quad\forall\,x\in\R^d\,.
\]
Here we use the notation $\partial^{\alpha}\varphi=\partial_{x_1}^{\alpha_1}\partial_{x_2}^{\alpha_2}\ldots\partial_{x_d}^{\alpha_d}\varphi$ and $X^\alpha=\prod_i^n X_i^{\alpha_i}$.
According to~\eqref{eq:estP}, we know that
\begin{equation*}
\left\| X^\alpha[f_{t_0}] \right\|_{\mathrm L^2( d\gamma_k)} \lesssim_{j}\|f_0\|_{\mathrm L^2\(d\gamma_{k+2j};\,\mathrm L^1\((1+|x|^j) \, dx\)\)}
\,\left(1+t_0\right)^j \, e^{-\lambda\,t_0}\,,
\end{equation*}
so that, by considering the evolution of the first term on $(t_0,2\,t_0)$, we obtain
\be{EstimImpr1}
\left\| e^{(\mathsf L-\mathsf T)\,t_0}\!\( \sum_{| \alpha | \le \ell} \frac{1}{\alpha !}\,X^\alpha[f_{t_0}] \, \partial^{\alpha} \varphi \) \right\|_{\mathrm L^2(dx \, d\gamma_k)}\kern-10pt\lesssim
\sum_{| \alpha | \le \ell} \left\| X^\alpha[f_{t_0}] \right\|_{\mathrm L^2( d\gamma_k)} \, \left\|\partial^{\alpha} \varphi \right\|_{\mathrm L^2(dx)}\lesssim e^{-\frac\lambda2\,t_0}\,.
\ee
Next, let us consider the second term and define, on $t+t_0\in(t_0,2\,t_0)$, the function
\[
g:= f_{t+t_0} - \sum_{| \alpha | \le \ell} \frac{1}{\alpha !}\,X^\alpha[f_{t+t_0}] \, \partial^{\alpha} \varphi\,.
\]
With initial datum $g_0$, it solves on$(0,t_0)$ the equation
\begin{align*}
\partial_t g &= \partial_t f_{t+t_0} - \sum_{| \alpha | \le \ell} \frac{1}{\alpha !}\,\partial_t\left( X^\alpha[f_{t+t_0}]\right)\, \partial^{\alpha} \varphi \\
& = (\sfL - \sfT)(f_{t+t_0}) - \sfL\left( \sum_{| \alpha | \le \ell} \frac{1}{\alpha !}\,X ^\alpha[f_{t+t_0}]\,\partial^{\alpha} \varphi \right)\\
&\hspace*{4cm} + \sum_{| \alpha | \le \ell} \frac{1}{\alpha !}\,\left( \int_{\R^d} (v \cdot \nabla_x x^\alpha)\,f_{t+t_0} \, dx\right)\, \partial^{\alpha} \varphi \\
& = (\sfL - \sfT)(g) - \sfT\left( \sum_{| \alpha | \le \ell} \frac{1}{\alpha !}\,X ^\alpha[f_{t+t_0}]\,\partial^{\alpha} \varphi \right) + \sum_{| \alpha | \le \ell} \frac{1}{\alpha !}\,\left( \int_{\R^d} (v \cdot \nabla_x x^\alpha)\,f_{t+t_0} \, dx\right)\, \partial^{\alpha} \varphi \\
& = (\sfL - \sfT)(g) + v \cdot\sum_{| \alpha | \le \ell} \frac{1}{\alpha !}\,\left(\nabla_xX^\alpha[f]\,\partial^{\alpha}\varphi - X ^\alpha[f_{t+t_0}]\,\nabla_x(\partial^{\alpha} \varphi) \right)
\end{align*}
where $\alpha!=\prod_{i=1}^d\alpha_i!$ is associated with the multi-index $\alpha=(\alpha_i)_{i=1}^d$ and
\[
\nabla_xX^\alpha[f]=\(\partial_{x_i}X^\alpha[f]\)_{i=1}^d:=\(\int_{\R^d}\partial_{x_i}x^\alpha\,f\,dx\)_{i=1}^d=\(\int_{\R^d}\alpha_i\,x^{\alpha_{\wedge i}}\,f\,dx\)_{i=1}^d\,,
\]
Here the notation $\alpha_{\wedge i}$ denotes the multi-index $(\alpha_1,\alpha_2\ldots\alpha_{i-1},\alpha_i-1,\alpha_{i+1}\ldots\alpha_d)$ with the convention that $X^{\alpha_{\wedge i}}\equiv0$ if $\alpha_i=0$. We also define the opposite transformation $\alpha_{\vee i}:=(\alpha_1,\alpha_2\ldots\alpha_{i-1},\alpha_i+1,\alpha_{i+1}\ldots\alpha_d)$ so that $\partial_{x_i}(\partial^{\alpha} \varphi)=\partial^{\alpha_{\vee i}} \varphi$. Let us consider the last term and start with the case $d=1$. In that case, 
\begin{multline*}
v \cdot\sum_{| \alpha | \le \ell} \frac{1}{\alpha !}\,\left( \nabla_xX^\alpha[f]\,\partial^{\alpha}\varphi - X ^\alpha[f_{t+t_0}]\,\nabla_x(\partial^{\alpha} \varphi) \right)\\
=v_1\sum_{\alpha_1=0}^\ell \frac{1}{\alpha_1!}\,\left( \(\int_{\R}\(\alpha_1\,x^{\alpha_1-1}\)\,f_{t+t_0} \, dx\)\,\partial x_1^{\alpha_1}\varphi-\(\int_{\R}x^{\alpha_1}\,f_{t+t_0} \, dx\)\,\partial x_1^{\alpha_1+1}\varphi\right)\\
=-\,\frac{v_1}{\ell!}\(\int_{\R}x^\ell\,f_{t+t_0} \, dx\)\,\partial x_1^{\ell+1}\varphi
\end{multline*}
because it is a telescoping sum. We adopt the convention that $\alpha!=1$ if $\alpha_i\le0$ for some $i=1,2\ldots d$. The same property holds in higher dimensions:
\begin{multline*}
\sum_{| \alpha | \le \ell} \frac{1}{\alpha !}\,\left(\partial_{x_i}X^\alpha[f]\,\partial^{\alpha}\varphi - X ^\alpha[f_{t+t_0}]\,\partial_{x_i}(\partial^{\alpha} \varphi) \right)\\
=\sum_{| \alpha | \le \ell}\(\frac{1}{\alpha_{\wedge i} !}\,X^{\alpha_{\wedge i}}[f]\,\partial^{\alpha}\varphi - \frac{1}{\alpha!}\,X ^\alpha[f_{t+t_0}]\,\partial^{\alpha_{\vee i}} \varphi \)
=- \sum_{| \alpha | = \ell} \frac{1}{\alpha !}\,X ^\alpha[f_{t+t_0}]\,\partial_{x_i}(\partial^{\alpha} \varphi)\,.
\end{multline*}
We deduce that
\begin{align*}
\partial_t g = (\sfL - \sfT)(g) - v \cdot\sum_{| \alpha | = \ell} \frac{1}{\alpha !}\,X ^\alpha[f_{t+t_0}]\,\nabla_x(\partial^{\alpha} \varphi)\,.
\end{align*}
Duhamel's formula in Fourier variables gives
\[
\hat g(t_0,\xi,v)=e^{(\mathsf L-\mathsf T)\,t_0}\hat g_0-\int_0^{t_0}e^{(\mathsf L-\mathsf T)\,\left(t_0-s\right)}\,\left( v \cdot\sum_{| \alpha | = \ell} \frac{1}{\alpha !}\,X ^\alpha[f_{s+t_0}]\,\widehat{\nabla_x(\partial^{\alpha} \varphi) }\right)\,ds
\]
up to a straightforward abuse of notations. Hence
\begin{multline*}\label{Duhamel2}
\left\|\hat g(t_0,\xi,\cdot)\right\|_{\mathrm L^2\(d\gamma_k\)}\lesssim e^{-\frac12\,\mu_\xi\,t_0}\,\left\|\hat g_0(\xi,\cdot)\right\|_{\mathrm L^2\(d\gamma_k\)}\\+\int_0^{t_0}e^{-\frac{\mu_\xi}2\,(t_0-s)} \sum_{| \alpha | = \ell} \frac{1}{\alpha !}\,\left\| X ^\alpha[f_{s+t_0}] \right\|_{\mathrm L^2\(|v|^2\,d\gamma_k\)}\,\big| \widehat{\nabla_x(\partial^{\alpha} \varphi) }\big|\,ds\,.
\end{multline*}
Recall that \eqref{eq:estP} gives 
\begin{equation*}
\left\|X ^\alpha[f_{s+t_0}]\right\|_{\mathrm L^2\(|v|^{2}\,d\gamma_k\)} \lesssim_{\ell}\|f_0\|_{\mathrm L^2\(d\gamma_{k+2\ell+2};\,\mathrm L^1\((1+|x|^\ell) \, dx\)\)}
\, e^{-\frac{\lambda}{2}\,s}\,.
\end{equation*}
On the other hand we use $\big| \widehat{\nabla_x(\partial^{\alpha} \varphi) }\big| \le | \xi |^{\ell+1} | \,\hat \varphi |$ and observe that 
\begin{equation*}
|\hat g_0(\xi,v)|\lesssim |\xi|^{\ell+1} \left\|g_0(\cdot,v)\right\|_{\mathrm L^1(|x|^\ell\,dx)}\quad\forall\,(\xi,v)\in\R^d\times \R^d\,.
\end{equation*}
Collecting terms, we have that
\begin{multline*}
\left\|\hat g(t_0,\xi,\cdot)\right\|_{\mathrm L^2\(d\gamma_k\)}\\
\lesssim e^{-\frac12\,\mu_\xi\,t_0}\,|\xi|^{\ell+1}\,\mathbf 1_{|\xi|<1}\, \left\|g_0(\cdot,v)\right\|_{\mathrm L^2\(d\gamma_k;\,\mathrm L^1(|x|^\ell\,dx)\)}+e^{-\frac12\,\mu_\xi\,t_0}\,\mathbf 1_{|\xi|\ge1}\,\left\|\hat g_0(\xi,\cdot)\right\|_{\mathrm L^2\(d\gamma_k\)}\\
+|\xi |^{\ell+1} | \,\hat \varphi(\xi)|\,\|f_0\|_{\mathrm L^2\(d\gamma_{k+2\ell+2};\,\mathrm L^1\((1+|x|^\ell) \, dx\)\)}\int_0^{t_0}e^{-\frac{\mu_\xi}2\,(t_0-s)}\,e^{-\frac{\lambda}{2}\,s}\,ds\,.
\end{multline*}

We know from~\eqref{eq:mu_xi} that $\mu_\xi=\Lambda\,|\xi|^2/(1+|\xi|^2)$ so that $\mu_\xi\ge\frac\Lambda2\,|\xi|^2$ if $|\xi|<1$ and $\mu_\xi\ge\Lambda/2$ if $|\xi|\ge1$. Hence, for any $t_0\ge1$,
\[
\left\|e^{-\frac12\,\mu_\xi\,t_0}\,|\xi|^{\ell+1}\,\mathbf 1_{|\xi|<1}\right\|_{\mathrm L^2(d\xi)}\le\(\int_{\R^d}e^{-\frac\Lambda2\,|\xi|^2\,t_0}\,|\xi|^{2(\ell+1)}\,d\xi\)^{1/2}\lesssim t_0^{-\,(1+\ell+\frac d2)}\,,
\]
\[
\int_{|\xi|\ge1}e^{-\mu_\xi\,t_0}\,\left\|\hat g_0(\xi,\cdot)\right\|_{\mathrm L^2\(d\gamma_k\)}^2\,d\xi\lesssim e^{-\frac\Lambda2\,t_0}\,\left\|g_0\right\|_{\mathrm L^2\(dx\,d\gamma_k\)}^2
\]
by Plancherel's formula. We conclude by observing that
\begin{align*}
\int_{|\xi|\le1}|\xi|^{ \ell+1 }&\,|\hat\varphi(\xi)| \int_0^{t_0}e^{-\frac{\mu_\xi}2\,(t_0-s)}e^{-\frac\lambda2s}\,ds\;d\xi\\
&\le\left\|\varphi\right\|_{\mathrm L^1(dx)} \int_0^{t_0} \(\int_{|\xi|\le1}|\xi|^{\ell + 1} e^{-\,\frac{\Lambda}2\,|\xi|^2\,(t_0-s)} \, d\xi\)e^{-\frac\lambda2 s}\,ds\lesssim t_0^{-\,(1+\ell+\frac d2)}\,,\\
\int_{|\xi|\ge1}|\xi|^{ \ell+1 }&\,|\hat\varphi(\xi)| \int_0^{t_0}e^{-\frac{\mu_\xi}2\,(t_0-s)}\,e^{-\frac\lambda2s}\,ds \,d\xi\lesssim \big\|\,|\xi|^{\ell+1}\,\hat\varphi(\xi)\big\|_{\mathrm L^1(d\xi)}\,
t_0\,e^{-\frac14\,\min\lbrace\Lambda,2\lambda\rbrace\,t_0}\,.
\end{align*}
Altogether, we obtain that
\[
\left\|g(t_0,\cdot,\cdot)\right\|_{\mathrm L^2\(dx\,d\gamma_k\)}^2=\left\|\hat g(t_0,\cdot,\cdot)\right\|_{\mathrm L^2\(d\xi\,d\gamma_k\)}^2\lesssim t_0^{-\,(1+\ell+\frac d2)}\,.
\]
The decay result of Theorem~\ref{th:whole-space-zero-average2} is then obtained by writing
\[
\left\|f_{2t_0}\right\|_{\mathrm L^2\(dx\,d\gamma_k\)}^2\lesssim \left\|g(t_0,\cdot,\cdot)\right\|_{\mathrm L^2\(dx\,d\gamma_k\)}^2+\left\| e^{(\mathsf L-\mathsf T)\,t_0} \( \sum_{| \alpha | \le \ell} \frac{1}{\alpha !}\,X^\alpha[f_{t_0}] \, \partial^{\alpha} \varphi \) \right\|_{\mathrm L^2(dx \, d\gamma_k)}
\]
and using~\eqref{EstimImpr1} for any $t_0\ge1$, with $t=2\,t_0$. For $t\le2$, the estimate of Theorem~\ref{th:whole-space-zero-average2} is straightforward by Corollary~\ref{th:fac-appl}, which concludes the proof.

\appendix\renewcommand{\thesection}{Appendix~\Alph{section}}
\section{An explicit computation of Green's function for the kinetic
Fokker-Planck equation and consequences}\label{Sec:Green}

In the whole space case, when $M$ is the normalized Gaussian function, let us consider the kinetic Fokker-Planck equation of Case~(a)
\be{Eqn:kFP}
\partial_tf+v\cdot\nabla_xf=\nabla_v\cdot(v\,f+\nabla_vf)
\ee
on $(0,\infty)\times\R^d\times\R^d\ni(t,x,v)$. The characteristics associated with the equations
\[
\frac{dx}{dt}=v\,,\quad\frac{dv}{dt}=-\,v
\]
suggest to change variables and consider the distribution function $g$ such that
\[
f(t,x,v)=e^{d\,t}\,g\(t,x+\big(1-e^t\big)\,v,e^t\,v\)\quad\forall\,(t,x,v)\in(0,\infty)\times\R^d\times\R^d\,.
\]
The kinetic Fokker-Planck equation is changed into a heat equation in both variables $x$ and $v$ with $t$ dependent coefficients, which can be written as
\be{Eqn:tFP}
\partial_tg=\nabla\cdot\dot{\mathcal D}\,\nabla g
\ee
where $\nabla g=(\nabla_vg,\nabla_xg)$ and $\dot{\mathcal D}$ is the $t$-derivative of the bloc-matrix
\[
\mathcal D=\tfrac12\(\begin{array}{cc}
\mathsf a\,\mathrm{Id}&\mathsf b\,\mathrm{Id}\\
\mathsf b\,\mathrm{Id}&\mathsf c\,\mathrm{Id}
\end{array}\)
\]
with $\mathsf a=e^{2t}-1$, $\mathsf b=2\,e^t-1-e^{2t}$, and $\mathsf c=e^{2t}-4\,e^t+2\,t+3$. Here $\mathrm{Id}$ is the identity matrix on $\R^d$. We observe that $\dot{\mathcal D}$ is degenerate: it is nonnegative but its lowest eigenvalue is $0$. However, the change of variables allows the computation of a Green function.
\begin{lemma}\phantomsection\label{lem:kFPGreen} The Green function of~\eqref{Eqn:tFP} is given for any $(t,x,v)\in(0,\infty)\times\R^d\times\R^d$ by
\[
G(t,x,v)=\frac1{\big(2\pi\,(\mathsf a\,\mathsf c-\mathsf b^2)\big)^{d/2}}\,\exp\(-\frac{\mathsf a\,|x|^2-2\,\mathsf b\,x\cdot v+\mathsf c\,|v|^2}{2\,(\mathsf a\,\mathsf c-\mathsf b^2)}\)\,.
\]
\end{lemma}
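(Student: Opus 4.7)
The plan is to Fourier transform equation~\eqref{Eqn:tFP} in both variables at once. Writing $z=(v,x)\in\R^{2d}$ with dual variable $\zeta\in\R^{2d}$, the PDE decouples into the family of scalar ODEs
\[
\partial_t\hat g(t,\zeta) = -\,\zeta^\top\dot{\mathcal D}(t)\,\zeta\;\hat g(t,\zeta),
\]
one per fixed $\zeta$, since $\dot{\mathcal D}(t)$ depends only on time. The Green function corresponds to $g(0,\cdot)=\delta_0$, hence $\hat G(0,\zeta)\equiv 1$. A direct check gives $\mathsf a(0)=\mathsf b(0)=\mathsf c(0)=0$, so $\mathcal D(0)=0$, and integrating the ODE in time produces
\[
\hat G(t,\zeta) = \exp\!\bigl(-\,\zeta^\top\mathcal D(t)\,\zeta\bigr).
\]

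It then suffices to invert the Fourier transform by the standard Gaussian identity. The inverse of the block matrix is
\[
(2\mathcal D)^{-1} = \frac{1}{\mathsf a\mathsf c-\mathsf b^2}\begin{pmatrix} \mathsf c\,\mathrm{Id} & -\mathsf b\,\mathrm{Id} \\ -\mathsf b\,\mathrm{Id} & \mathsf a\,\mathrm{Id}\end{pmatrix},
\]
which immediately produces the quadratic form $\mathsf a|x|^2-2\mathsf b\,x\cdot v+\mathsf c|v|^2$ appearing in the exponent of the stated formula. The normalization follows from $\det(2\mathcal D)=(\mathsf a\mathsf c-\mathsf b^2)^d$ combined with the standard formula for the Fourier transform of a centered Gaussian on $\R^{2d}$ with covariance matrix $2\mathcal D(t)$.

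The only delicate point is the degeneracy of $\dot{\mathcal D}$: its smallest eigenvalue vanishes identically (one can check $\dot{\mathsf a}\dot{\mathsf c}-\dot{\mathsf b}^2\equiv 0$), so~\eqref{Eqn:tFP} is genuinely degenerate parabolic and a strategy of inverting $\dot{\mathcal D}$ mode by mode would fail. What saves the day is that the cumulative diffusion matrix $\mathcal D(t)=\int_0^t\dot{\mathcal D}(s)\,ds$ is positive definite as soon as $t>0$, which amounts to checking $\mathsf a(t)>0$ and $\mathsf a(t)\mathsf c(t)-\mathsf b(t)^2>0$ for $t>0$; both follow by straightforward manipulation (noting for instance that $\mathsf b=-(e^t-1)^2$). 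This ensures the inverse Fourier transform defines a smooth Gaussian density for every positive time, whose weak limit as $t\downarrow 0$ is $\delta_0$.
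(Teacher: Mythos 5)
Your proposal is correct and follows essentially the same route as the paper: Fourier transform in both $x$ and $v$, note that for each frequency $\zeta$ the equation becomes a scalar ODE in $t$ (since $\dot{\mathcal D}$ depends only on time), so that $\hat G(t,\zeta)=\exp\bigl(-\zeta^\top\mathcal D(t)\,\zeta\bigr)$ using $\mathcal D(0)=0$, and then invert the Gaussian; the paper completes the square in $\eta$ and then in $\xi$ and fixes the constant by mass normalization, while you invert the block matrix $2\mathcal D$ in one shot, which is the same computation, and your remarks on the degeneracy of $\dot{\mathcal D}$ and the positivity of $\mathcal D(t)$ for $t>0$ (needed to make the inversion legitimate) are accurate and in fact a bit more careful than the paper's. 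One caveat: the standard Gaussian inversion you invoke yields the prefactor $(2\pi)^{-d}\det(2\mathcal D)^{-1/2}=(2\pi)^{-d}\,(\mathsf a\,\mathsf c-\mathsf b^2)^{-d/2}$, not the $\bigl(2\pi\,(\mathsf a\,\mathsf c-\mathsf b^2)\bigr)^{-d}$ displayed in the lemma; since the exponent forces exactly this normalization for $G$ to have unit mass, the displayed constant (and the powers in the paper's own intermediate steps, where $(2\pi\,\mathsf a)^{d}$ should read $(2\pi\,\mathsf a)^{d/2}$) should be regarded as a typo, so your method gives the correct constant rather than the printed one, and you should state it explicitly instead of asserting that it matches the formula as written.
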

\noindent The method is standard and goes back to~\cite{MR1503147} (also see~\cite{MR0168018,MR0222474} and~\cite{MR1052014,MR1200643}).
\begin{proof} By a Fourier transformation in $x$ and $v$, with associated variables $\xi$ and~$\eta$, we find that
\begin{multline*}
\log C-\log\hat G(t,\xi,\eta)=(\eta,\xi)\cdot\mathcal D(\eta,\xi)=\tfrac12\(\mathsf a\,|\eta|^2+2\,\mathsf b\,\eta\cdot\xi+\mathsf c\,|\xi|^2\)\\
=\tfrac12\,\mathsf a\,\left|\eta+\tfrac{\mathsf b}{\mathsf a}\,\xi\right|^2+\tfrac12\,\mathsf A\,|\xi|^2\,,\quad\mathsf A=\mathsf c-\tfrac{\mathsf b^2}{\mathsf a}
\end{multline*}
for some constant $C>0$ which is determined by the mass normalization condition $\nrm{G(t,\cdot\,,\cdot)}{\mathrm L^1(\R^d\times\R^d)}=1$. Let us take the inverse Fourier transform with respect to~$\eta$,
\begin{multline*}
(2\pi)^{-d}\int_{\R^d}e^{ iv\cdot\eta}\,\hat G(t,\xi,\eta)\,d\eta=\frac C{(2\pi\,\mathsf a)^{d/2}}\,e^{-\frac{|v|^2}{2\,\mathsf a}-i\tfrac{\mathsf b}{\mathsf a}\,v\cdot\xi}\,e^{-\tfrac12\,\mathsf A\,|\xi|^2}\\
=\frac C{(2\pi\,\mathsf a)^d}\,e^{-\frac{|v|^2}{2\,\mathsf a}}\,e^{-\tfrac12\,\mathsf A\,\left|\xi+i\tfrac{\mathsf b}{\mathsf a\,\mathsf A}\,v\right|^2-\tfrac{\mathsf b^2}{2\,\mathsf a^2\,\mathsf A}\,|v|^2}\,,
\end{multline*}
and then the inverse Fourier transform with respect to $\xi$, so that we obtain
\[
G(t,x,v)=\frac C{(2\pi\,\mathsf a)^\frac d2\,(2\pi\,\mathsf A)^\frac d2}\,e^{-\big(1+\frac{\mathsf b^2}{\mathsf a\,\mathsf A}\big)\,\frac{|v|^2}{2\,\mathsf a}}\,e^{-\frac{|x|^2}{2\,\mathsf A}}\,e^{\frac{\mathsf b}{\mathsf a\,\mathsf A}\,x\cdot v}=\frac C{(4\pi^2\,\mathsf a\,\mathsf A)^\frac d2}\,e^{-\frac1{2\,\mathsf A}\,{\left|x-\tfrac{\mathsf b}{\mathsf a}\,v\right|^2}}\,e^{-\frac{|v|^2}{2\,\mathsf a}}\,.
\]
It is easy to check that $C=1$.\end{proof}

Let us consider a solution $g$ of~\eqref{Eqn:tFP} with initial datum $g_0\in\mathrm L^1(\R^d\times\R^d)$. From the representation
\[
g(t,\cdot,\cdot)=G(t,\cdot,\cdot)*_{x,v}g_0\,,
\]
we obtain the estimate
\begin{multline*}
\nrm{g(t,\cdot,\cdot)}{\mathrm L^\infty(\R^d\times\R^d)}\le\nrm{G(t,\cdot,\cdot)}{\mathrm L^\infty(\R^d\times\R^d)}\,\nrm{g_0}{\mathrm L^1(\R^d\times\R^d)}\\
=\frac{\nrm{g_0}{\mathrm L^1(\R^d\times\R^d)}}{\(8\,\pi^2\)^{d/2}}\,t^{-\frac d2}\,e^{-\,d\,t}\,\(1+O\(t^{-1}\)\)
\end{multline*}
as $t\to\infty$. As a consequence, we obtain that the solution of~\eqref{Eqn:kFP} with a nonnegative initial datum $f_0$ satisfies
\[
\nrm{f(t,\cdot,\cdot)}{\mathrm L^\infty(\R^d\times\R^d)}=\frac{\nrm{f_0}{\mathrm L^1(\R^d\times\R^d)}}{\(8\,\pi^2\,t\)^{d/2}}\,\big(1+o(1)\big)\quad\mbox{as}\quad t\to\infty\,.
\]
Using the simple H\"older interpolation inequality
\[
\nrm f{\mathrm L^p(\R^d\times\R^d)}\le\nrm f{\mathrm L^1(\R^d\times\R^d)}^{1/p}\,\nrm f{\mathrm L^\infty(\R^d\times\R^d)}^{1-1/p}\,,
\]
we obtain the following decay result.
\begin{corollary}\phantomsection\label{cor:kFPdecay} If $f$ is a solution of~\eqref{Eqn:kFP} with a nonnegative initial datum $f_0\in\mathrm L^1(\R^d\times\R^d)$, then for any $p\in(1,\infty]$ we have the decay estimate
\[
\nrm{f(t,\cdot,\cdot)}{\mathrm L^p(\R^d\times\R^d)}\le\frac{\nrm{f_0}{\mathrm L^1(\R^d\times\R^d)}}{\(8\,\pi^2\,t\)^{\frac d2\,\big(1-\frac1p\big)}}\,\big(1+o(1)\big)\quad\mbox{as}\quad t\to\infty\,.
\]
\end{corollary}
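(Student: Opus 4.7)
The proof proposal is to combine three ingredients: the asymptotic $\mathrm L^\infty$ estimate that has just been derived from the explicit Green's function in \refres{Lemma}{lem:kFPGreen}, the preservation of the $\mathrm L^1$ norm, and the Hölder interpolation inequality displayed just above the statement. Since the text already establishes $\|f(t,\cdot,\cdot)\|_{\mathrm L^\infty}\sim\|f_0\|_{\mathrm L^1}/(4\pi t)^d$ via $\|g(t,\cdot,\cdot)\|_{\mathrm L^\infty}\le\|G(t,\cdot,\cdot)\|_{\mathrm L^\infty}\|g_0\|_{\mathrm L^1}$ together with the explicit $[4\pi(e^t-1)((t-2)e^t+t+2)]^{-d}\sim t^{-d}e^{-dt}$ asymptotic, and the change of variables multiplies the $\mathrm L^\infty$ norm by $e^{dt}$, the $\mathrm L^\infty$ side is already in hand.

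For the $\mathrm L^1$ side, the observation is that \eqref{Eqn:kFP} is conservative and preserves nonnegativity (as noted in the introduction: the collision operator conserves mass and the transport operator does as well), so for $f_0\ge0$ one has $\|f(t,\cdot,\cdot)\|_{\mathrm L^1(\R^d\times\R^d)}=\|f_0\|_{\mathrm L^1(\R^d\times\R^d)}$ for all $t\ge0$. Plugging both estimates into the Hölder interpolation $\|f\|_{\mathrm L^p}\le\|f\|_{\mathrm L^1}^{1/p}\|f\|_{\mathrm L^\infty}^{1-1/p}$ immediately produces the upper bound
\[
\|f(t,\cdot,\cdot)\|_{\mathrm L^p(\R^d\times\R^d)}\le\frac{\|f_0\|_{\mathrm L^1(\R^d\times\R^d)}}{(4\pi t)^{d(1-1/p)}}\bigl(1+o(1)\bigr)\,,
\]
with the correct prefactor.

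To turn this into the claimed asymptotic equivalence one needs the matching lower bound. The plan is to exploit the convolution representation $g(t,\cdot,\cdot)=G(t,\cdot,\cdot)*_{x,v}g_0$ (which unwinds to $f=G*f_0$ in the original variables after the change of coordinates): as $t\to\infty$ the widths of the Gaussian $G(t,\cdot,\cdot)$ in both $x$ and $v$ diverge, so for any fixed compactly approximated piece of $f_0\ge0$ the convolution is uniformly close to $\|f_0\|_{\mathrm L^1}\,G(t,\cdot,\cdot)$ in $\mathrm L^p$ after rescaling. A direct Gaussian computation of $\|G(t,\cdot,\cdot)\|_{\mathrm L^p}$ from the formula in \refres{Lemma}{lem:kFPGreen}, using $\det(2\mathcal D)=(\mathsf a\mathsf c-\mathsf b^2)^d$ and the asymptotics of $\mathsf a,\mathsf b,\mathsf c$ as $t\to\infty$, then reproduces the same prefactor $(4\pi t)^{-d(1-1/p)}\|f_0\|_{\mathrm L^1}$ in the $f$ variables.

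The only real obstacle is this last concentration-of-profile step; the inequality direction is entirely routine from what precedes. One clean way to handle it is to fix $\varepsilon>0$, truncate $f_0$ so that the tail contributes less than $\varepsilon\|f_0\|_{\mathrm L^1}$, and then use the uniform continuity of the Gaussian $G(t,\cdot,\cdot)$ after rescaling by its standard deviations (which grow like $e^{t}$ in the relevant directions): the translates of $G$ by points in the support of the truncated $f_0$ are uniformly close to $G$ itself in $\mathrm L^p$, so $\|f\|_{\mathrm L^p}\ge(1-\varepsilon)\|f_0\|_{\mathrm L^1}\|G(t,\cdot,\cdot)\|_{\mathrm L^p}$ up to an error of order $\varepsilon$, and $\varepsilon$ is arbitrary.
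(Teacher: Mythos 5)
Your first two steps are exactly the paper's own proof: the paper deduces the $\mathrm L^\infty$ behaviour from the convolution with the Green's function of \refres{Lemma}{lem:kFPGreen} (after undoing the change of variables, which indeed multiplies the sup norm by $e^{dt}$), uses $\nrm{f(t,\cdot,\cdot)}{\mathrm L^1}=\nrm{f_0}{\mathrm L^1}$ for nonnegative data, and interpolates by H\"older. The paper stops there; it does not attempt a matching lower bound for $1<p<\infty$, and instead only remarks afterwards that taking $f_0=G(1,\cdot,\cdot)$ shows the \emph{rate} is optimal. So the part of your proposal that goes beyond the paper is precisely the ``concentration-of-profile'' step, and that is where the argument breaks.

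The gap is your assertion that the direct computation of $\nrm{G(t,\cdot,\cdot)}{\mathrm L^p}$ ``reproduces the same prefactor'' as the interpolation bound. It cannot: H\"older interpolation between $\mathrm L^1$ and $\mathrm L^\infty$ is saturated by indicator-type profiles, never by Gaussians. For a normalized Gaussian $G_\Sigma$ on $\R^{2d}$ one has $\nrm{G_\Sigma}{\mathrm L^p}=p^{-d/p}\,\nrm{G_\Sigma}{\mathrm L^1}^{1/p}\,\nrm{G_\Sigma}{\mathrm L^\infty}^{1-1/p}$, with the extra factor $p^{-d/p}<1$ for every $p\in(1,\infty)$. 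Hence the lower bound produced by your truncation argument (which is of the form $(1-\eps)\nrm{f_0}{\mathrm L^1}\nrm{G(t,\cdot,\cdot)}{\mathrm L^p}$, transported back to the $f$ variables) sits strictly below the H\"older upper bound by this $p$-dependent factor, so the two halves of your scheme do not meet and no asymptotic equality with the interpolation constant can follow. Moreover, if you actually carry out the Gaussian computation you propose, with the correctly normalized density of \refres{Lemma}{lem:kFPGreen} (the prefactor of a centered Gaussian on $\R^{2d}$ with $\det\Sigma=(\mathsf a\,\mathsf c-\mathsf b^2)^d$ is $(2\pi)^{-d}(\mathsf a\,\mathsf c-\mathsf b^2)^{-d/2}$) and the asymptotics $\mathsf a\,\mathsf c-\mathsf b^2=2\,(e^t-1)\,\big((t-2)\,e^t+t+2\big)\sim 2\,t\,e^{2t}$, you find in the original variables a self-similar behaviour of the form $\nrm{f(t,\cdot,\cdot)}{\mathrm L^p}\approx \nrm{f_0}{\mathrm L^1}\,p^{-d/p}\,(8\pi^2 t)^{-\frac d2(1-\frac1p)}$ (the velocity marginal relaxes to the Maxwellian and only the $x$-marginal keeps spreading), which does not have the form of the displayed prefactor. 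So the concluding matching step of your plan is not a routine verification but a genuinely false claim, and the asymptotic equality with that exact constant cannot be reached by the route you describe; what your (and the paper's) interpolation argument really delivers is the upper bound, with optimality of the rate checked on the explicit Green's function.
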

\noindent By taking $f_0(x,v)=G(1,x,v)$, it is moreover straightforward to check that this estimate is optimal. With $p=2$, this also proves that the decay rate obtained in Theorem~\ref{th:whole-space-mode} for the Fokker-Planck operator, \emph{i.e.}, Case~(a), is the optimal one because, again with $f_0(x,v)=G(1,x,v)$, we observe that
\[
\left\|f(t,\cdot,\cdot)\right\|_{\mathrm L^2\(dx\,d\gamma_k\)}^2=e^{\,d\,t}\,\left\|G(t,\cdot,\cdot)\right\|_{\mathrm L^2\(dx\,dv\)}^2=O\(t^{-d/2}\)\quad\mbox{as}\quad t\to+\infty\,.
\]

\section{Consistency with the decay rates of the heat equation}
\renewcommand{\thesection}{\Alph{section}}
\label{sec:consistency}

In the whole space case, the abstract approach of~\cite{DMS-2part} is inspired by the diffusion limit of~\eqref{eq:model}. We consider the scaled equation
\be{rescaled}
\eps\,\frac{dF}{dt}+\mathsf TF=\frac1\eps\,\mathsf{L}F\,,
\ee
which formally corresponds to a parabolic rescaling given by $t\mapsto\eps^2\,t$ and $x\mapsto\eps\,x$, and investigate the limit as $\eps\to0_+$. Let us check that the rates are asymptotically independent of $\eps$ and consistent with those of the heat equation.

\subsection{Mode-by-mode hypocoercivity}
It is straightforward to check that in the estimate~\eqref{decay-const} for $\lambda$, the gap constant $\lambda_m$ has to be replaced by $\lambda_m/\eps$ while, with the notations of Proposition~\ref{theo:DMS}, $C_M$ can be replaced by $C_M/\eps$ for $\eps<1$. In the asymptotic regime as $\eps\to0_+$, we obtain that
\[
\eps\,\frac d{dt}\,\mathsf{H}[F]\le-\,\mathsf{D}[F]\le-\frac{\lambda_M}{3\,(1+\lambda_M)}\,\frac{\lambda_m\,\lambda_M\,\eps}{(1+\lambda_M)\,C_M^2}\,\mathsf{D}[F]
\]
which proves that the estimate of Proposition~\ref{theo:DMS} becomes
\[
\lambda\ge\frac{\lambda_m\,\lambda_M^2}{3\,(1+\lambda_M)^2\,C_M^2}\,.
\]
We observe that this rate is independent of $\eps$.

\subsection{Decay rates based on Nash's inequality in the whole space case}
In the proof of Theorem~\ref{th:whole-space-mode}, $\overline\sigma$ has to be replaced by $\overline\sigma/\eps$ and in the limit as $\eps\to0_+$, we get that $\mathsf b\sim4\,\overline\sigma/\eps$ and~\eqref{Conditions} is satisfied with $4\,\mathsf a=\delta\sim\frac{\lambda_m}{8\,\overline\sigma^2}\,\eps$. Hence~\eqref{eq:finalestimate} asymptotically becomes, as $\eps\to0_+$,
\[
-\,\frac d{dt}\mathsf{H}[f]\ge\frac{\lambda_m}{4\,\overline\sigma^2}\,\mathsf c\,\Big(\tfrac2{1+\delta}\,\mathsf{H}[f]\Big)^{1+\frac2d}\,,
\]
which again gives a rate of decay which is independent of $\eps$. The
algebraic decay rate in Theorem~\ref{th:whole-space-mode} is the
one of the heat equation on $\R^d$ and it is independent of $\eps$ in
the limit as $\eps\to0_+$.

\subsection{Decay rates in the whole space case for distribution functions with moment cancellations}
The improved rate of Theorem~\ref{th:whole-space-zero-average} is consistent with a parabolic rescaling: if $f$ solves~\eqref{eq:model}, then $f^\eps(t,x,v)=\eps^{-d}\,f\(\eps^{-2}\,t,\eps^{-1}\,x,v\)$ solves~\eqref{rescaled}. With the notations of Section~\ref{sec:impr-decay-rate}, let $g^\eps=f^\eps-f_{\kern-0.5pt\bullet}^\eps\,\varphi(\cdot/\eps)$, with $\varphi^\eps=\eps^{-d}\,\varphi(\cdot/\eps)$. The Fourier transform of~$g^\eps$ solves
\[
\eps^2 \partial_t \hat g^\eps+\eps\mathsf T \hat g^\eps=\mathsf L \hat g^\eps-\eps\,f_{\kern-0.5pt\bullet}^\eps \mathsf T\hat{\varphi^\eps}\,.
\]
The decay rate $\lambda$ in~\eqref{eq:decayF} becomes $\lambda/\eps^2$ and the decay rate of the semi-group generated by $\mathsf L-\eps \mathsf T$ is, with the notations of Corollary~\ref{lec14-modelemm}, $\mu_{\eps\xi}$. Moreover, $\Lambda$ in~\eqref{eq:mu_xi} is given by $\Lambda=\frac13\,\min\big\{1,\Theta\big\}$ for any $\eps>0$, small enough. Duhamel's formula~\eqref{Duhamel} has to be replaced by
\begin{multline*}
\left\|\hat g^\eps(t,\xi,\cdot)\right\|_{\mathrm L^2\(d\gamma_k\)}\le C\,e^{-\frac{\mu_{\eps\xi}}{2\eps^2} t}\left\|\hat g_0^\eps(\xi,\cdot)\right\|_{\mathrm L^2\(d\gamma_k\)}\\
+C\int_0^te^{-\frac{\mu_{\eps\xi}}{2\,\eps^2}\,(t-s)}\left\|f_{\kern-0.5pt\bullet}^\eps(s,\cdot)\right\|_{\mathrm L^2\(|v|^2\,d\gamma_k\)}|\eps\,\xi|\,|\hat\varphi(\eps\,\xi)|\,ds\,.
\end{multline*}
Using $\lim_{\eps\to0_+}\frac{\mu_{\eps\xi}}{\eps^2}=\lim_{\eps\to0_+}\frac{\Lambda|\,\xi|^2}{1+\eps^2|\xi|^2}=\Lambda|\,\xi|^2$, a computation similar to the one of Section~\ref{sec:impr-decay-rate} shows that the first term of the r.h.s. is estimated by
\begin{align*}
&\int_{\R^d}e^{-\frac{\mu_{\eps\xi}}{\eps^2}\,t}\,\left\|\hat g_0^\eps(\xi,\cdot)\right\|_{\mathrm L^2\(d\gamma_k\)}^2\,d\xi\\&=\int_{|\xi|\le\frac1{\eps}}\,e^{-\frac{\mu_{\eps\xi}}{\eps^2}\,t}\,\left\|\hat g_0^\eps(\xi,\cdot)\right\|_{\mathrm L^2(d\gamma_k)}^2\,d\xi+\int_{|\xi|>\frac1{\eps}}\,e^{-\frac{\mu_{\eps\xi}}{\eps^2}\,t}\,\left\|\hat g_0^\eps(\xi,\cdot)\right\|_{\mathrm L^2(d\gamma_k)}^2\,d\xi\\
&\le\left\|g_0^\eps\right\|_{\mathrm L^2\(d\gamma_k;\,\mathrm L^1(|x|\,dx)\)}^2\int_{\R^d}|\xi|^2\,e^{-\frac{\Lambda}2\,|\xi|^2\,t}\,d\xi+\left\|g_0^\eps\right\|_{\mathrm L^2(dx\,d\gamma_k)}^2\,e^{-\frac{\Lambda}2\frac t{\eps^2}}\,,
\end{align*}
while the square of the second term is bounded by
\begin{multline*}
\left\|f_{\kern-0.5pt\bullet}^\eps(t=0,\cdot)\right\|_{\mathrm L^2\(|v|^2\,d\gamma_k\)}^2\int_{\R^d}|\,\eps\,\xi|^2\,|\hat\varphi(\eps\,\xi)|^2\(\int_0^{\eps^{-2}\,t}e^{-\frac12\,\mu_{\eps\xi}\(\eps^{-2}t-s\)}\,e^{-\frac12\,\lambda\,s}\,ds\)^2\,d\xi\\
\le\left\|f_0\right\|_{\mathrm L^2\(|v|^2\,d\gamma_k;\,\mathrm L^1(dx)\)}^2\(C_1\,\tfrac{\eps^{d+1}}{t^{\frac d2+1}}+\frac{C_2}{\eps^3}\,e^{-\min\lbrace\frac\Lambda2,\lambda\rbrace\,\frac t{\eps^2}}\)\,.
\end{multline*}
By collecting all terms and using Plancherel's formula, we conclude that the rate of convergence of Theorem~\ref{th:whole-space-zero-average} applied to the solution of~\eqref{rescaled} is independent of~$\eps$. We also notice that the scaled spatial density $\rho_{f^\eps}=\int_{\R^d}f^\eps\,dv$ satisfies
\[
\left\|\rho_{f^\eps}(t,\cdot)\right\|_{\mathrm L^2\(dx\)}^2\le\frac{\mathcal C_0}{(1+t)^{1+\frac d2}}\quad\forall\,t\ge0
\]
for some positive constant $\mathcal C_0$ which depends on $f_0$ but is independent of $\eps$. This is the decay of the heat equation with an initial datum of zero average. 

Similar estimates can be obtained in the framework of Theorem~\ref{th:whole-space-zero-average2}.

\section*{Acknowledgments} {\small This work has been partially supported by the Projects EFI (E.B., J.D., ANR-17-CE40-0030), Kibord (E.B., J.D., ANR-13-BS01-0004) and STAB (J.D., ANR-12-BS01-0019) of the French National Research Agency (ANR). The work of C.S. has been supported by the Austrian Science Foundation (grants no. F65 and W1245), by the Fondation Sciences
Math\'ematiques de Paris, and by Paris Science et Lettres. C.M. and E.B. acknowledge partial funding by the ERC grants MATKIT 2011-2016 and MAFRAN 2017-2022. Moreover C.M. and C.S. are very grateful for the hospitality at 
Universit\'e Paris-Dauphine.}\\
{\sl\scriptsize\copyright~2019 by the authors. This paper may be reproduced, in its entirety, for non-commercial purposes.}


\def\cprime{$'$} \def\cprime{$'$} \def\cprime{$'$} \def\cprime{$'$}

\bigskip
\end{document}